\newif\ifPreprint \Preprinttrue
\newif\ifSubmission \Submissionfalse
\patchcmd{\@settitle}{\uppercasenonmath\@title}{\scshape\large}{}{}
\patchcmd{\@setauthors}{\MakeUppercase}{\scshape\normalsize}{}{}
\newcommand{%
  \tikzexternalenable
  \tikzsetnextfilename{}
  \input{.tikz}
  \tikzexternaldisable
}[1]{%
  \tikzexternalenable
  \tikzsetnextfilename{#1}
  \input{#1.tikz}
  \tikzexternaldisable
}
\newcommand{\abbr}[1][abbrev]{#1.\xspace}
\newcommand{\cf}{\abbr[cf]}
\newcommand{\eg}{\abbr[e.g]}
\newcommand{\ie}{\abbr[i.e]}
\newcommand{\wrt}{\abbr[w.r.t]}
\newtheorem{theorem}{Theorem}
\providecommand{\sfnamesize}{\relsize{0}}
\providecommand{\sfname}[1]{\textsf{\sfnamesize#1}\xspace}
\newcommand{\CONOPTfour}{\sfname{CONOPT4}}
\newcommand{\GAMS}{\sfname{GAMS}}
\newcommand{\Ipopt}{\sfname{Ipopt}}
\newcommand{\KNITRO}{\sfname{KNITRO}}
\newcommand{\SNOPT}{\sfname{SNOPT}}
\newcommand{\AROMA}{\sfname{AROMA}}
\newcommand{\STREET}{\sfname{STREET}}
\newcommand{\field}{\mathbb}
\newcommand{\complex}{\field{C}}
\newcommand{\naturals}{\field{N}}
\newcommand{\N}{\naturals}
\newcommand{\graphset}{}
\newcommand{\Graph}{\graphset{G}}
\newcommand{\Arcs}{\graphset{A}}
\newcommand{\Nodes}{\graphset{V}}
\newcommand{\Vertices}{\Nodes}
\newcommand{\node}{u}
\newcommand{\otherNode}{v}
\newcommand{\arc}{a}
\newcommand{\Outedges}{\delta^{\text{out}}}
\newcommand{\Inedges}{\delta^{\text{in}}}
\newcommand{\Activeset}[1]{\mathcal{A}\ifx#1\empty\left(#1\right)\fi}
\newcommand{\activeset}[1]{\mathcal{A}\ifx#1\empty\else(#1)\fi}
\newcommand{\primalinfeasibility}[1][]{\theta_{\mathrm{pri}\ifx#1\empty\else{,{#1}}\fi}}
\newcommand{\red}{\reduced}
\newcommand{\kktrighthandside}[1][]{\omega\ifx#1\empty\else_{#1}\fi}
\newcommand{\kktrhs}{\kktrighthandside}
\newcommand{\kktreducedrighthandside}[1][\empty]{\kktrhs\ifx#1\empty_{\text{\red}}\else_{\text{\red,#1}}\fi}
\newcommand{\reduced}{\text{r}}
\newcommand{\steplength}[2][]{\alpha\ifx#1\empty_{#2}\else_{#2,#1}\fi}
\newcommand{\st}{\text{s.t.}}
\newcommand{\fcdot}{\,\cdot\,}
\newcommand{\fcarg}[1]{\def\fc@rg{#1}\ifx\fc@rg\empty\fcdot\else\fc@rg\fi}
\newcommand{\abs}[1]{\lvert\fcarg{#1}\rvert}
\newcommand{\Abs}[1]{\left\lvert#1\right\rvert}
\newcommand{\defset}[3][\defsep]{\set{#2#1#3}}
\newcommand{\dparlong}[2]{\frac{\partial#2}{\partial#1}}
\newcommand{\dpar}{\dparlong}
\newcommand{\norm}[2][]{\lVert\fcarg{#2}\rVert\ifx#1\empty\else_{#1}\fi}
\newcommand{\Norm}[2][]{\left\lVert#2\right\rVert\ifx#1\empty\else_{#1}\fi}
\newcommand{\set}[1]{\{#1\}}
\newcommand{\snorm}[2][]{\lvert\!\lvert\!\lvert
  \fcarg{#1}\rvert\!\rvert\!\rvert\ifx#2\empty\else_{#1}\fi}
\newcommand{\Snorm}[2][]{\left\lvert\!\left\lvert\!\left\lvert
  #2\right\rvert\!\right\rvert\!\right\rvert\ifx#1\empty\else_{#1}\fi}
\newcommand{\sprod}[3][]{%
  \langle\fcarg{#2},\fcarg{#3}\rangle\ifx#1\empty\else_{#1}\fi}
\newcommand{\Sprod}[3][]{%
  \left\langle\fcarg{#2},\fcarg{#3}\right\rangle\ifx#1\empty\else_{#1}\fi}
\newcommand{\optmathindex}[1]{\ifx#1\empty\else,#1\fi}
\newcommand{\opttextindex}[1]{\ifx#1\empty\else,\text{#1}\fi}
\newcommand{\optmathsb}[1]{\ifx#1\empty\else_{#1}\fi}
\newcommand{\opttextsb}[1]{\ifx#1\empty\else_{\text{#1}}\fi}
\newcommand{\optmathsp}[1]{\ifx#1\empty\else^{#1}\fi}
\newcommand{\opttextsp}[1]{\ifx#1\empty\else^{\text{#1}}\fi}
\newcommand{\continuousFunctions}[1]{\mathcal{C}\ifx#1\empty\else^{#1}\fi}
\newcommand{\piecewiseContinuousFunctions}[1]{\mathcal{C}_p\ifx#1\empty\else^{#1}\fi}
\newcommand{\define}{\mathrel{{\mathop:}{=}}}
\newcommand{\diff}{\xspace\,\mathrm{d}}
\newcommand{\objref}[4]{\def\obj@rg{#4}%
  #1\ifx\obj@rg\empty#2\else#3\xspace\ref{#4}--\fi\ref}
\newcommand{\Sobjref}[1]{\objref{#1}{~}{s}}
\newcommand{\Figref}[1][]{\Sobjref{Figure}{#1}}
\newcommand{\Secref}[1][]{\Sobjref{Section}{#1}}
\newcommand{\Tabref}[1][]{\Sobjref{Table}{#1}}
\newcommand{\area}{A}
\newcommand{\bore}{D}
\newcommand{\density}{\rho}
\newcommand{\dens}{\density}
\newcommand{\diameter}{\bore}
\newcommand{\diam}{\diameter}
\newcommand{\gravity}{g}
\newcommand{\grav}{\gravity}
\newcommand{\length}{L}
\newcommand{\massflow}{q}
\newcommand{\mflow}{\massflow}
\newcommand{\Power}{P}
\newcommand{\pressure}{p}
\newcommand{\press}{\pressure}
\newcommand{\roughness}{\rugosity}
\newcommand{\rugosity}{k}
\newcommand{\temperature}{T}
\newcommand{\temp}{\temperature}
\newcommand{\velocity}{v}
\newcommand{\vel}{\velocity}
\newcommand{\MS}[1]{}
\newcommand{\MSil}[1]{}
\newcommand{\RK}[1]{}
\newcommand{\RKil}[1]{}
\renewcommand{\MS}[1]{\todo[author=MS,color=orange!50,size=\small]{#1}}
\renewcommand{\MSil}[1]{\todo[inline,author=MS,color=orange!50,size=\small]{#1}}
\renewcommand{\RK}[1]{\todo[author=RK,color=green!50,size=\small]{#1}}
\renewcommand{\RKil}[1]{\todo[inline,author=RK,color=green!50,size=\small]{#1}}
\definecolor{luh-dark-blue}{rgb}{0.0, 0.313, 0.608}
\newcommand{\powerprofile}[1]{
  \begin{tikzpicture}
  \def \thickness {very thick}
  \begin{axis}[
  scale only axis,
  xmin=0, xmax=24,
  xlabel=$t$ (\si{h}),
  ylabel=Power (\si{\MW}),
  ylabel near ticks,
  grid=both,
  legend pos=outer north east]
  \addplot[blue, dashed, \thickness] table [x=t,
  y=$P_c$]{power-profiles/#1.dat};
  \addlegendentry{$\sum_{\arc \in \Ac} P_a(t)$}
  \addplot[brown, dashdotted, \thickness] table [x=t,
  y=$P_w_upper$]{power-profiles/#1.dat};
  \addlegendentry{$\Powerwaste^+$}
  \addplot[green, \thickness] table [x=t, y=$P_w$]{power-profiles/#1.dat};
  \addlegendentry{$\Powerwaste(t)$}
  \addplot[red, dotted, \thickness] table [x=t, y=$P_p$]{power-profiles/#1.dat};
  \addlegendentry{$P_p(t)$}
  \end{axis}
  \end{tikzpicture}
}
\newcommand{\powerprofilenobound}[1]{
  \begin{tikzpicture}
    \def \thickness {very thick}
    \begin{axis}[
    scale only axis,
    xmin=0, xmax=24,
    xlabel=$t$ (\si{h}),
    ylabel=Power (\si{\MW}),
    ylabel near ticks,
    grid=both,
    legend pos=outer north east]
    \addplot[blue, dashed, \thickness] table [x=t,
    y=$P_c$]{power-profiles/#1.dat};
    \addlegendentry{$\sum_{\arc \in \Ac} P_a(t)$}
    \addplot[green, \thickness] table [x=t, y=$P_w$]{power-profiles/#1.dat};
    \addlegendentry{$\Powerwaste(t)$}
    \addplot[red, dotted, \thickness] table [x=t,
    y=$P_p$]{power-profiles/#1.dat};
    \addlegendentry{$P_p(t)$}
    \end{axis}
  \end{tikzpicture}
}
\newcommand{\tempmflowprofile}[1]{
  \begin{tikzpicture}
    \def \thickness {very thick}
    \begin{axis}[
    scale only axis,
    xmin=0, xmax=24,
    ymin=345, ymax=410,
    axis y line*=left,
    xlabel=$t$ (\si{h}),
    ylabel=Temperature (\si{\K}),
    ylabel near ticks,
    grid=both,
    legend style={at={(1.07,1)},anchor=north west}]
    \addplot[red, \thickness] table [x=t, y=$T_a$]{power-profiles/#1.dat};
    \addlegendentry{$\temp_{\arc:\otherNode}(t)$}
    \end{axis}
    \begin{axis}[
    scale only axis,
    xmin=0, xmax=24,
    axis y line*=right,
    axis x line=none,
    ylabel=Mass flow (\si{\kg \per \s}),
    ylabel near ticks,
    legend style={at={(1.07,0.9)},anchor=north west}]
    \addplot[blue, \thickness] table [x=t, y=$q_a$]{power-profiles/#1.dat};
    \addlegendentry{$\mflow_{\arc}(t)$}
    \end{axis}
  \end{tikzpicture}
}
\newcommand{\Vdh}{\Nodes}
\newcommand{\Adh}{\Arcs}
\newcommand{\Vff}{\Nodes_{\text{ff}}}
\newcommand{\Vbf}{\Nodes_{\text{bf}}}
\newcommand{\Aff}{\Arcs_{\text{ff}}}
\newcommand{\Abf}{\Arcs_{\text{bf}}}
\newcommand{\Ac}{\Arcs_{\text{c}}}
\newcommand{\ad}{\arc_{\text{d}}}
\newcommand{\Apos}{\Arcs_{\text{pos}}}
\newcommand{\Aneg}{\Arcs_{\text{neg}}}
\newcommand{\inflowArcs}{\mathcal{I}}
\newcommand{\outflowArcs}{\mathcal{O}}
\newcommand{\nodeArcs}{\delta}
\newcommand{\costcoefficient}{\omega}
\newcommand{\costcoeff}{\costcoefficient}
\newcommand{\costcoeffwaste}{\costcoeff_{\mathrm{w}}}
\newcommand{\costcoeffgas}{\costcoeff_{\mathrm{g}}}
\newcommand{\costcoeffpress}{\costcoeff_{\mathrm{p}}}
\newcommand{\heattransfercoefficient}{U}
\newcommand{\heattrans}{\heattransfercoefficient}
\newcommand{\negativemassflow}{\gamma}
\newcommand{\negmflow}{\negativemassflow}
\newcommand{\positivemassflow}{\beta}
\newcommand{\posmflow}{\positivemassflow}
\newcommand{\Powerwaste}{\Power_{\mathrm{w}}}
\newcommand{\Powergas}{\Power_{\mathrm{g}}}
\newcommand{\Powerpress}{\Power_{\mathrm{p}}}
\newcommand{\slope}{h'}
\newcommand{\soiltemp}{\temp_0}
\newcommand{\specificheatcapacity}{c_{\mathrm{p}}}
\newcommand{\stagnation}{\mathrm{s}}
\newcommand{\heatcap}{\specificheatcapacity}
\newcommand{\spacediff}{\Delta x}
\newcommand{\spacediffarc}{\spacediff_\arc}
\newcommand{\temperaturebackflow}{\temp^{\text{bf}}}
\newcommand{\tempbf}{\temperaturebackflow}
\newcommand{\temperatureforeflowarc}{\temp^{\text{ff}}_\arc}
\newcommand{\tempffarc}{\temperatureforeflowarc}
\newcommand{\tempdiffvar}{\Delta \temp}
\newcommand{\tend}{T}
\newcommand{\Tint}{\mathcal{T}}
\newcommand{\timediff}{\Delta t}
\newcommand{\rev}[1]{#1}
\begin{document}

\title{Nonlinear Optimization of District Heating Networks}
\author[R. Krug, V. Mehrmann, M. Schmidt]%
{Richard Krug, Volker Mehrmann, Martin Schmidt}

\address[R. Krug]{%
  Friedrich-Alexander-Universität Erlangen-Nürnberg,
  Discrete Optimization,
  Cauerstr.~11,
  91058~Erlangen,
  Germany}
\email{richard.krug@fau.de}

\address[V. Mehrmann]{%
  Institute for Mathematics, MA 4-5, TU Berlin,
  Straße des 17. Juni 136, 10623 Berlin, Germany}
\email{mehrmann@math.tu-berlin.de}

\address[M. Schmidt]{%
  Trier University,
  Department of Mathematics,
  Universitätsring 15,
  54296 Trier,
  Germany}
\email{martin.schmidt@uni-trier.de}

\date{\today}

\begin{abstract}
  We develop a complementarity-constrained nonlinear optimization
model for the time-dependent control of district heating networks.
The main physical aspects of water and heat flow in these networks are
governed by nonlinear and hyperbolic 1d partial differential equations.
In addition, a pooling-type mixing model is required at the nodes of the
network to treat the mixing of different water temperatures.
This mixing model can be recast using suitable complementarity constraints.
The resulting problem is a mathematical program with complementarity
constraints subject to nonlinear partial differential equations
describing the physics.
In order to obtain a tractable problem, we apply suitable
discretizations in space and time, resulting in a finite-dimensional
optimization problem with complementarity constraints for which we
develop a suitable reformulation with improved constraint regularity.
Moreover, we propose an instantaneous control approach for the
discretized problem, discuss practically relevant penalty formulations,
and present preprocessing techniques that are used to simplify the mixing
model at the nodes of the network.
Finally, we use all these techniques to solve realistic instances.
Our numerical results show the applicability of our techniques in
practice.


\end{abstract}

\keywords{District heating networks,
Nonlinear optimization,
Euler equations,
Differential-algebraic equations,
Mixing,
Complementarity constraints%
%
%
}
\subjclass[2010]{90-XX, 
90Cxx, 
90C30, 
90C35, 
90C90
%
}

\maketitle

\section{Introduction}
\label{sec:introduction}

Many countries in the world are striving to make a transition towards
an energy system that is mainly based on using energy from renewable
sources like wind and solar power, complemented by classical energy
sources like gas, oil, coal, or waste incineration.
The increasing use of highly fluctuating renewable energy sources
leads to many challenging problems from the engineering, mathematical,
and economic point of view.
A key to the success of this energy transition is the efficient and
intelligent coupling of the energy resources and the optimal operation
of the energy networks and energy storage.
In this direction, district heating networks play an important role,
since they can be used as energy storage, \eg, to balance fluctuations
at the electricity exchange.
To this end, district heating networks need to be operated
efficiently so that no unnecessary energy is used and, on the other
hand, security of supply should not be compromised.
This is a hard task since uncertainties of the heat
demand of households need to be considered and because the
physics-based time delays in these networks make it difficult to
react to changes in short periods of time.

To make the described intelligent use of district heating networks
possible, one needs (i) a proper mathematical model of the
network as well as fast and stable (ii) simulation and (iii)
optimization techniques.
In this paper, we develop a continuous optimization model for the
short-term optimal operation of a district heating network.
To this end, we assume that the heat demand of the households is
given and set up a nonlinear optimization model (NLP) for the control
of the heat supply and the pressure control of the network.
The building blocks of the entire model are nonlinear models of the
households, where thermal energy is withdrawn, the network
depot, in which the heat is supplied to the network and the pressure
is controlled, and a model of the transport network itself.

The model of the transport network is governed by two main
mathematical components; a system of one-dimensional (1d) nonlinear
hyperbolic partial differential equations (PDEs) to model the
relations of mass flows, water pressure, and temperature in a pipe
over time, and  a system of algebraic equations that is used at every
node of the network to model mass conservation, pressure continuity, and the
mixing of water temperatures.
The last aspect is very challenging, since these mixing models are
genuinely nonsmooth due to their dependence on flow directions, which
are part of the solution of the PDE and not known a priori.
To avoid integer-valued variables, we develop a mixing model using
complementarity constraints.
In summary, we consider a PDE-constrained nonlinear
mathematical program with complementarity constraints (MPCC),
which is a highly challenging class of optimization problems;
see, \eg, \cite{luo_pang_ralph_1996}.

Somehow surprisingly, there is not much literature about the mathematical
optimization of district heating networks.
A branch of applied publications focuses on specific case
studies. For instance, in~\cite{Pirouti_et_al:2013}, a case study for a
simplified model of a district heating project in South Wales is
carried out. The focus is more on an economic analysis than on mathematical and
physical modeling or optimization techniques.
The resulting problems are solved by a linear solver invoked in
a sequential linear programming approach.
A more general discussion about the technology and potentials of
district heating networks is presented in \cite{Rezaie_Rosen:2012}.
In \cite{Schweiger_et_al:2017}, the authors discuss different discrete
and continuous optimization problems.
As in our contribution, the authors start with a PDE-constrained
optimization problem and apply the first-discretize-then-optimize
approach yielding a finite-dimensional problem that is then solved.
Energy storage or storage tanks combined with district heating
networks are discussed in \cite{Colella_et_al:2012,Verda_Colella:2011}
and the impact of load variations and the integration of solar energy
is considered in \cite{Ben_Hassine_Eicker:2013}.
The design of district heating networks for stationary mathematical models is
carried out in \cite{Roland_Schmidt:2020,Bordin_et_al:2016,Dorfner_Hamacher:2014}.
In contrast to the mid- to long-term planning problems addressed in
these papers, in \cite{Sandou_et_al:2005}, the authors consider a
model predictive control (MPC) approach for computing a good
operational control of a network with a given design.
The resulting models are continuous nonlinear problems that need to be
solved in every iteration of the MPC loop.
A related approach is discussed in \cite{Verrilli_et_al:2017}, where an
MPC control is computed for a district heating system with thermal
energy storage and flexible loads.
Numerical simulation of district heating networks using a local time
stepping method is studied in \cite{Borsche_et_al:2018} and model
order reduction techniques for the hyperbolic equations in district
heating networks are discussed \rev{in \cite{Rein_Mohring_et_al:2019}
or~\cite{Rein_et_al:2018,Rein_et_al:2019}.
In the last two} papers, however, no optimization tasks are
considered.

As discussed above, a very important aspect of district heating
network models is the mixing of different water temperatures at the
nodes of the network.
Since the models are similar, related literature can also be
found in the field of optimization for gas transport networks; \cf,
\eg, \cite{van_der_Hoeven:2004,%
  Schmidt_et_al:2014,%
  Geissler_et_al:2018,%
  Schmidt_et_al:2016,%
  Geissler_et_al:2015,%
  Hante_Schmidt:2017}.

Our contribution is to consider the
optimization of district heating networks at a great level of detail and
physical accuracy; see Section~\ref{sec:modeling} for our modeling approach
that includes both 1d nonlinear PDEs and mixing models.
In order to obtain tractable optimization problems, we present tailored
discretizations of the PDEs in space and time in
Section~\ref{sec:discretized-distr-heat-network} and also provide
different equivalent formulations for the nodal mixing conditions; see
Section~\ref{sec:mixing-models}.
In Section~\ref{sec:optimization-techniques}, we present
problem-specific optimization techniques that enable us to solve
instances on realistic networks with reasonable space and time
discretizations. To be more specific, we set up an instantaneous
control approach that can both be used stand-alone and as a procedure
for computing initial values of good quality for the problem on the
entire time horizon.
Additionally, we derive suitable penalty formulations of the problem
that render the instances numerically more tractable.
Moreover, we present an easy-but-useful preprocessing technique to
decide flow directions in advance so that the amount of nonsmoothness
and the number of complementarity constraints for modeling the nodal mixing
conditions is reduced.
The described  techniques are then used to solve realistic instances in
Section~\ref{sec:numerical-results}.
Finally, we close the paper with a conclusion and some comments on
possible directions of future work in Section~\ref{sec:conclusion}.


\section{Modeling}
\label{sec:modeling}

We use a connected and directed graph $\Graph = (\Vertices, \Arcs)$ to model
the district heating network.
The network consists of
\begin{itemize}
\item a forward-flow part, which provides the consumers with hot water;
\item consumers, that use the hot water for heating;
\item a backward-flow part, which transports the cooled water back to the
  depot;
\item and the depot, where the heating of the cooled water takes
  place.
\end{itemize}
See Figure~\ref{fig:sample-network} for a schematic district
heating network.

The nodes $\Vdh = \Vff \cup \Vbf$
are the disjoint union of nodes~$\Vff$ of the forward-flow part and
nodes~$\Vbf$ of the backward-flow part of the network.
The arcs $\Adh$ are divided into forward-flow arcs~$\Aff$,
backward-flow arcs $\Abf$, consumer arcs $\Ac$, and the depot arc
$\ad$ of the district heating network provider.
Therefore, $\Arcs = \Aff \cup \Abf \cup \Ac \cup
\set{\ad}$ and we have
\begin{align*}
  \arc = (\node, \otherNode) \in \Aff & \implies \node \in \Vff, \
                                        \otherNode \in \Vff,\\
  \arc = (\node, \otherNode) \in \Abf & \implies \node \in \Vbf, \
                                        \otherNode \in \Vbf,\\
  \arc = (\node, \otherNode) \in \Ac & \implies \node \in \Vff, \
                                       \otherNode \in \Vbf,\\
  \ad = (\node, \otherNode) & \implies \node \in \Vbf, \
                              \otherNode \in \Vff.
\end{align*}
We optimize the district heating network in the time horizon $\Tint
\define [0, \tend]$ with predefined final time~$\tend > 0$.
In what follows, we introduce mathematical models for the different
parts of the network; namely pipes, nodes, consumers, and the depot of
the network provider.
After that, we introduce bounds for some of the quantities and state the
objective function.
To conclude this section, we summarize the parts to obtain a complete model of
the entire district heating network.
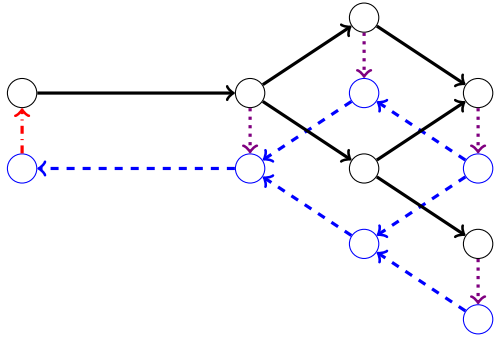
\begin{figure}
  \tikzexternalenable
  \tikzsetnextfilename{tikz-imgs/dhn}
  \begin{tikzpicture}
  \def \radius {11pt}
  \def \locations {3/0/1, 4.5/1/2, 4.5/-1/3, 6/0/4, 6/-2/5}
  \def \distance {1}
  \def \lineThickness {very thick}

  \node[circle, draw, minimum width=\radius](s) at (0,0){ };
  \foreach \x/\y/\name in \locations
  \node[circle, draw, minimum width=\radius](\name) at (\x,\y){ };

  \node[circle, draw, minimum width=\radius, blue](t)
  at (0,0 - \distance){};
  \foreach \x/\y/\name in \locations
  \node[circle, draw, minimum width=\radius, blue](R\name) at (\x,\y -
  \distance){ };

  \foreach \source/\dest in {R5/R3, R4/R3, R4/R2, R3/R1, R2/R1, R1/t}
  \draw [->, blue, dashed, \lineThickness] (\source) -- (\dest);

  \foreach \source/\dest in {s/1, 1/2, 1/3, 2/4, 3/4, 3/5}
  \draw [->, \lineThickness] (\source) -- (\dest);

  \foreach \n in {1,2,4,5}
  \draw [->, violet, dotted, \lineThickness] (\n) -- (R\n);

  \draw [->, red, dashdotted, \lineThickness] (t) -- (s);
\end{tikzpicture}
%
  \tikzexternaldisable

  \caption{A schematic district heating network: Forward-flow arcs are
    plotted in solid black, backward-flow arcs in dashed blue, consumers in
    dotted violet, and the depot in dashed-dotted red.}
  \label{fig:sample-network}
\end{figure}

\subsection{Pipe Modeling}
\label{sec:pipes}

We use the 1d Euler equations to model the physics of
hot water flow in the pipe network
\cite{Borsche_et_al:2018,Rein_et_al:2018,Koecher:2000}.
In what follows, we use $x \in [0, \length_\arc]$ to denote the spatial
coordinate, with  $\length_\arc$ being the length of pipe $\arc \in \Aff \cup \Abf$.
The continuity equation then is given by
\begin{equation}
  \label{eq:distr-heat-continuity}
  \dparlong{t}{\dens_\arc}(x,t) + \dparlong{x}{(\dens_\arc \vel_\arc)}(x,t) = 0,
  \quad \arc \in \Aff \cup \Abf.
\end{equation}
The 1d momentum equation for compressible
fluids in cylindrical pipes has the form
\begin{equation}
  \label{eq:distr-heat-euler-momentum}
  \begin{split}
    \dparlong{t}{(\dens_\arc \vel_\arc)}(x,t)
    + \dparlong{x}{\press_\arc}(x,t)
    + \dparlong{x}{(\dens_\arc \vel^2_\arc)}(x,t)
    \quad \\
    +\, \grav \dens_\arc(x,t) h'_\arc
    + \lambda_\arc \frac{\abs{\vel_\arc}
      \vel_\arc \dens_\arc}{2\diam_\arc}(x,t) & = 0,
    \quad \arc \in \Aff \cup \Abf;
  \end{split}
\end{equation}
see, \eg, \cite{Schmidt_et_al:2014,Mehrmann_Schmidt_Stolwijk:2017}.

Here and in what follows, $\dens_\arc$, $\press_\arc$, and
$\vel_\arc$ denote the density, pressure, and velocity of the water
in pipe $\arc$.
Furthermore, $\diam_\arc$ is the diameter and $\slope_\arc$ is the
slope of pipe~$\arc$, which we assume to be constant.
The gravitational acceleration is denoted by $g$.
The friction factor~$\lambda_\arc$ for turbulent flow is modeled by the
flow-independent law of Nikuradse (see, \eg,
\cite{Koch_et_al:ch02:2015}), \ie,
\begin{equation*}
  \lambda_\arc = \left( 2 \log_{10} \left(
      \frac{\diam_\arc}{\roughness_\arc} \right) + 1.138 \right)^{-2},
  \quad \arc \in \Aff \cup \Abf,
\end{equation*}
where $\roughness_\arc$ is the roughness of the inner pipe wall.
We are aware that there are also other empirical models of the
friction factor for the turbulent case, which might also render
$\lambda$ being dependent on $x$ and $t$.
Moreover, there is Hagen--Poiseuille's exact law for laminar flow;
see, \eg, \cite{Koch_et_al:ch02:2015} and the references therein.
For the ease of presentation, we restrict ourselves to the law of
Nikuradse, which only depends on the data of the pipe.
However, other models can in principle also be incorporated.
For a list of all parameters and variables of the model see
\Tabref{tab:dhn-notation}\rev{, where we also distinguish between
directly controllable variables at the depot and physical state
variables in the network.}
\begin{table}
  \centering
  \caption{\rev{Controllable variables at the depot (top), physical
      state variables in the network (mid), and
      given parameters (bottom) of the district heating network
      model}}
  \label{tab:dhn-notation}
  \begin{tabular}{lll}
  \toprule
  Symbol & Explanation & Unit\\
  \midrule
  $\Powerwaste(t)$ & Power production through waste incineration & \si{\W}\\
  $\Powergas(t)$ & Power production through gas combustion & \si{\W}\\
  $\Powerpress(t)$ & Pumping power to increase the water pressure & \si{\W}\\
  \midrule
  $\dens_\arc(x,t)$ & Density of the water in pipe $\arc$ & \si{\kg \per
    \cubic \m}\\
  $\vel_\arc(x,t)$ & Flow velocity in pipe $\arc$ & \si{\m \per \s}\\
  $\press_\arc(x,t)$ & Pressure in pipe $\arc$ & \si{\Pa}\\
  $\temp_\arc(x,t)$ & Water temperature in pipe $\arc$ & \si{\K}\\
  $\mflow_\arc(x,t)$ & Mass flow in pipe $\arc$; $\mflow_\arc=\area_\arc
  \dens_\arc \vel_\arc$ & \si{\kg \per \s}\\
  $\press_\node(t)$ & Pressure at node $\node$ & \si{\Pa}\\
  $\temp_\node(t)$ & (Mixed) water temperature at node $\node$ & \si{\K}\\
  \midrule
  $t$ & Time coordinate; $t \in \Tint$ & \si{\s}\\
  $\Tint$ & Time horizon $\Tint \define [0,\tend]$ & ---\\
  $x$ & Spatial coordinate in a pipe & \si{\m}\\
  $\length_\arc$ & Length of pipe $\arc$ & \si{\m}\\
  $\diam_\arc$ & Diameter of pipe $\arc$ & \si{\m}\\
  $\area_\arc$ & Cross-sectional area of pipe $\arc$; $\area_\arc =
                 \pi\left(\diam_\arc / 2\right)^2$ & \si{\square \m}\\
  $\slope_\arc$ & Slope of pipe $\arc$ & 1\\
  $\lambda_\arc$ & Friction factor of pipe $\arc$ & 1\\
  $\Power_\arc(t)$ & Power consumption of the consumer at arc $\arc$ & \si{\W}\\
  $\roughness_\arc$ & Roughness of the inner wall of pipe $\arc$ & \si{\m}\\
  $\heattrans_\arc$ & Heat transfer coefficient of the wall of pipe $\arc$
  & \si{\W \per \square \m \per \K}\\
  $\tempffarc$ & Consumers' minimum inlet water temperature & \si{\K} \\
  $\tempbf$ & Consumers' outlet water temperature & \si{\K}\\
  $\soiltemp$ & Surrounding temperature & \si{\K}\\
  $\heatcap$ & Specific heat capacity of water & \si{\J \per \kg \per \K}\\
  $\press_\stagnation$ & Stagnation pressure of the network & \si{\Pa} \\
  $\xi_\Power$ & Max. change in power over time at depot & \si{\W \per \second} \\
  $\xi_\temp$ & Max. change in outlet temperature over time at depot & \si{\K \per \second} \\
  $\grav$ & Gravitational acceleration & \si{\m \per \square \s}\\
  $\costcoeffwaste$ & Cost coefficient for waste incineration & \si{\text{\euro} \per \watt} \\
  $\costcoeffgas$ & Cost coefficient for gas combustion & \si{\text{\euro} \per \watt} \\
  $\costcoeffpress$ & Cost coefficient for pumps & \si{\text{\euro} \per \Pa} \\
  \bottomrule
\end{tabular}
%

\end{table}

Since we assume that the water is incompressible, \ie,
\begin{equation}
  \label{eq:distr-heat-incompressibility}
  \dparlong{t}{\dens_\arc}(x,t) + \vel_\arc(x,t) \dparlong{x}{\dens_\arc}(x,t)
  = 0,\quad \arc
  \in \Aff \cup \Abf;
\end{equation}
see, \eg, \cite{Marsden_Chorin:1993} for details on fluid
flow modeling, we can rewrite the continuity equation
\eqref{eq:distr-heat-continuity} as
\begin{align*}
  0 & = \dparlong{t}{\dens_\arc}(x,t) + \dparlong{x}{(\dens_\arc
  \vel_\arc)}(x,t)\\
  & =  \dparlong{t}{\dens_\arc}(x,t)
  + \dens_\arc \dparlong{x}{\vel_\arc}(x,t)
  + \vel_\arc \dparlong{x}{\dens_\arc}(x,t)\\
  & = \dens_\arc \dparlong{x}{\vel_\arc}(x,t).
\end{align*}
Since the density $\dens_\arc(x, t)$ is always positive, we can divide by it
and obtain
\begin{equation*}
  \dparlong{x}{\vel_\arc}(x,t) = 0.
\end{equation*}
\rev{Using these consequences of incompressibility,
the momentum equation~\eqref{eq:distr-heat-euler-momentum} simplifies
to}
\begin{align*}
  & \quad \dparlong{t}{(\dens_\arc \vel_\arc)}(x,t)
  + \dparlong{x}{(\dens_\arc \vel^2_\arc)}(x,t)\\
  = \ & \dens_\arc \dparlong{t}{\vel_\arc}(x,t)
  + \vel_\arc \dparlong{t}{\dens_\arc}(x,t)
  + (\dens_\arc \vel_\arc) \dparlong{x}{\vel_\arc}(x,t)
  + \vel_\arc \dparlong{x}{(\dens_\arc \vel_\arc)}(x,t)\\
  = \ & \dens_\arc \dparlong{t}{\vel_\arc}(x,t)
  + \vel_\arc \left(\dparlong{t}{\dens_\arc}(x,t)
  + \dparlong{x}{(\dens_\arc \vel_\arc)}(x,t)\right)\\
  = \ & \dens_\arc \dparlong{t}{\vel_\arc}(x,t)
\end{align*}
and \rev{we thus} obtain the simplified 1d system of incompressible
Euler equations
\begin{subequations}
  \label{eq:distr-heat-euler-incompressible}
  \begin{align}
    \label{eq:distr-heat-continuity-incompressible}
    \dparlong{x}{\vel_\arc}(x,t) & = 0, \quad \arc \in \Aff \cup \Abf,
    \\
    \label{eq:distr-heat-euler-momentum-incompressible}
    \dens_\arc(x,t) \dparlong{t}{\vel_\arc}(x,t) +
    \dparlong{x}{\press_\arc}(x,t)
    + \grav \dens_\arc(x,t) h'_\arc
    \quad \nonumber \\
    +\, \lambda_\arc \frac{\abs{\vel_\arc}
    \vel_\arc \dens_\arc}{2\diam_\arc}(x,t)
                                 & = 0, \quad \arc \in \Aff \cup \Abf,
  \end{align}
\end{subequations}
that we use for setting up our optimization problem.

It should be noted that \eqref{eq:distr-heat-continuity-incompressible} implies constant
velocity in the pipe, \ie, $\vel_{\arc}(x,t) = \vel_{\arc}(t)$ for all $x \in
[0, \length_\arc]$.

The thermal energy equation for each pipe $\arc \in \Aff \cup \Abf$ is given by
\begin{equation}
  \label{eq:distr-heat-energy}
  \dparlong{t}{\temp_\arc}(x,t)
  + \vel_\arc(t) \dparlong{x}{\temp_\arc}(x,t)
  + \frac{4 \heattrans_\arc}{\heatcap \dens_\arc(x,t)
    \diam_\arc}(\temp_\arc(x,t)-\soiltemp)
  = 0,
  \quad \arc \in \Aff \cup \Abf;
\end{equation}
see \cite{Sandou_et_al:2005,Borsche_et_al:2018,Rein_et_al:2018}.
In \eqref{eq:distr-heat-energy}, $\temp_\arc$ describes the water temperature,
$\heattrans_\arc$ is the heat transfer
coefficient of the pipe's wall, $\heatcap$ is the specific heat capacity of
water, and $\soiltemp$ is the temperature in the environment
surrounding the pipe.

To close the system, one finally needs initial and boundary conditions
as well as an equation of state.
In the literature one can find formulas for the density of water
depending on the temperature; see, \eg, \cite{Koecher:2000}.
Since we make the incompressibility assumption
\eqref{eq:distr-heat-incompressibility}, in the context of our
optimization model, we assume as another simplification that the
density of the water is constant, \ie, $\dens_\arc(x, t) = \dens$.

This assumption allows us to rewrite the momentum equation
\eqref{eq:distr-heat-euler-momentum-incompressible} as follows:
\begin{equation*}
  \dparlong{x}{\press_\arc}(x,t)
  = - \dens \dparlong{t}{\vel_\arc}(t)
  - \grav \dens h'_\arc
  - \lambda_\arc \frac{\abs{\vel_\arc}
    \vel_\arc \dens}{2\diam_\arc}(t),
  \quad \arc \in \Aff \cup \Abf.
\end{equation*}
Since the right-hand side does not depend on the spatial
coordinate~$x$, the pressure $\press_\arc(x, t)$ is linear in $x$.
Thus, it holds \rev{that}
\begin{equation}
  \label{eq:distr-heat-euler-momentum-const-density}
  \frac{\press_\arc(\length_\arc,t) - \press_\arc(0,t)}{\length_\arc}
  = - \dens \dparlong{t}{\vel_\arc}(t)
  - \grav \dens h'_\arc
  - \lambda_\arc \frac{\abs{\vel_\arc}
    \vel_\arc \dens}{2\diam_\arc}(t),
  \quad \arc \in \Aff \cup \Abf.
\end{equation}

In this subsection, we have presented a simplified model of the 1d
compressible Euler equations for the description of the pipe
flow. More sophisticated models, or even complete hierarchies of
models for example those constructed in gas flow \cite{DomHLT17},
should be used for detailed simulation methods or the analysis of the
flow. However, in the context of our optimization methods, already the
discussed modeling level presents a mathematical and computational
challenge.

\subsection{Nodal Coupling Equations}
\label{sec:nodal-coupling-equations}

In this subsection, we expand our network model by suitable coupling
conditions on the nodes for mass flow, pressure, and
temperature. These conditions are modeled by algebraic equations. 

The mass balance equation for each node~$\node \in \Vdh$ is
described by
\begin{equation}
  \label{eq:distr-heat-mass-balance}
  \sum_{\arc \in \Inedges(\node)} \mflow_\arc(t) =
  \sum_{\arc \in \Outedges(\node)} \mflow_\arc(t),
  \quad \node \in \Vdh,\, t \in \Tint,
\end{equation}
where $\mflow_\arc = \area_\arc \dens \vel_\arc$ denotes the mass flow of
pipe $\arc$ with cross-sectional area $\area_\arc=\pi(\diam_\arc /
2)^2$.
Here and in what follows, we use the standard $\delta$-notation, \ie,
we define
\begin{align*}
  \Outedges(\node) & \define \defset{\arc \in \Arcs}{\exists
                     \otherNode \text{ with } \arc = (\node,
    \otherNode)},
  \\
  \Inedges(\node) & \define \defset{\arc \in \Arcs}{\exists \otherNode
                    \text{ with }\arc = (\otherNode, \node)},
\end{align*}
and $\delta(\node) \define \Outedges(\node) \cup \Inedges(\node)$.
Note that \eqref{eq:distr-heat-mass-balance} implies that we have no in-
and outflow to or from the network.

The pressure continuity equations for each node are given by
\begin{subequations}
  \label{eq:distr-heat-pressure-continuity}
  \begin{align}
    \label{eq:distr-heat-pressure-continuity-out}
    \press_\node(t) & = \press_\arc(0, t),
                      \quad \node \in \Vdh, \, \arc \in \Outedges(\node),\, t \in \Tint,\\
    \label{eq:distr-heat-pressure-continuity-in}
    \press_\node(t) & = \press_\arc(\length_\arc, t),
                      \quad \node \in \Vdh, \, \arc \in \Inedges(\node),\, t \in \Tint,
  \end{align}
\end{subequations}
where $\press_\node(t)$ denotes the pressure at node~$\node$;
see Figure~\ref{fig:pressure-continuity} for an illustration.
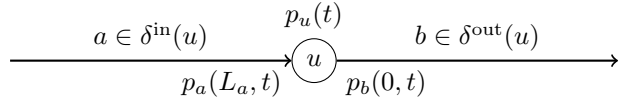
\begin{figure}
  \tikzexternalenable
  \tikzsetnextfilename{tikz-imgs/pressure-continuity}
  \begin{tikzpicture}
  \def \radius {16pt}
  \def \distance {4}
  \def \lineThickness {thick}

  \node[circle, draw, minimum width=\radius,label={$\press_\node(t)$}](u) at
  (0,0){$\node$};
  \draw [->, \lineThickness] (-\distance,0) -- (u)
  node[midway, above]{$a \in \Inedges(\node)$} node[at end, below
  left]{$\press_a(\length_a,t)$};
  \draw [->, \lineThickness] (u) -- (\distance,0)
  node[midway, above]{$b \in \Outedges(\node)$} node[at start, below
  right]{$\press_b(0,t)$};
\end{tikzpicture}
%
  \tikzexternaldisable

  \caption{Pressure continuity at node $u$}
  \label{fig:pressure-continuity}
\end{figure}
We also need to introduce temperature mixing equations to describe the
behavior of the water temperature in the nodes, where water of different temperatures is mixed.
Since the mixing model depends on the flow directions, we define the inflow and
outflow arcs of a node~$\node$ at a given time~$t \in \Tint$ as
\begin{align*}
  \inflowArcs(\node, t)
  & \define
    \defset{\arc \in \Inedges(\node)}{\vel_\arc(t) \geq 0}
    \cup
    \defset{\arc \in \Outedges(\node)}{\vel_\arc(t) \leq 0}, & & \node \in
                                                                 \Vdh,\,
                                                                 t \in \Tint,
  \\
  \outflowArcs(\node, t)
  & \define
    \defset{\arc \in \Inedges(\node)}{\vel_\arc(t) < 0}
    \cup
    \defset{\arc \in \Outedges(\node)}{\vel_\arc(t) > 0}, & & \node \in
                                                              \Vdh,\, t \in \Tint.
\end{align*}
The temperature mixing equations for each node are modeled as
\begin{subequations}
  \label{eq:distr-heat-temperature-mixing-with-cp}
  \begin{align}
    \label{eq:distr-heat-temperature-mixing-in-with-cp}
    \temp_\node(t) & = \frac{\sum_{\arc \in \inflowArcs(\node, t)}
                     \abs{\mflow_\arc(t)} \heatcap \temp_{\arc:\node}(t)}
                     {\sum_{\arc \in \inflowArcs(\node, t)} \abs{\mflow_\arc(t)}
                     \heatcap},
    & & \node \in \Vdh,\, t \in \Tint,
    \\
    \label{eq:distr-heat-temperature-mixing-out-with-cp}
    \temp_\node(t) & = \temp_{\arc:\node}(t),
    & & \node \in \Vdh, \ \arc \in \outflowArcs(\node, t),\, t \in \Tint,
  \end{align}
\end{subequations}
where $\temp_\node(t)$ denotes the mixed water temperature at
node~$\node$ and where we use the notation
\begin{equation*}
  \temp_{\arc:\node}(t) \define
  \begin{cases}
    \temp_\arc(0, t), & \node \in \Vdh, \ \arc \in \Outedges(\node),\, t \in
    \Tint,\\
    \temp_\arc(\length_\arc, t), & \node \in \Vdh, \ \arc \in
    \Inedges(\node),\, t \in \Tint;
  \end{cases}
\end{equation*}
see, \eg,
\cite{Schmidt_et_al:2014,Schmidt_et_al:2016,Hante_Schmidt:2017}, where
a similar model is considered for mixing effects in natural
gas transport networks.

Equation~\eqref{eq:distr-heat-temperature-mixing-in-with-cp}
can be derived from the conservation of
energy if the specific heat capacities in
\eqref{eq:distr-heat-temperature-mixing-with-cp} \rev{are independent
  of the water temperature.
  Since we consider the mixing of water only, the additional
  assumption that all heat capacities are the
  same is appropriate.
  Using this, \eqref{eq:distr-heat-temperature-mixing-with-cp} can be
  simplified to}
\begin{subequations}
  \label{eq:distr-heat-temperature-mixing}
  \begin{align}
    \label{eq:distr-heat-temperature-mixing-in}
    \temp_\node(t) & = \frac{\sum_{\arc \in \inflowArcs(\node, t)}
                     \abs{\mflow_\arc(t)} \temp_{\arc:\node}(t)}
                     {\sum_{\arc \in \inflowArcs(\node, t)}
                     \abs{\mflow_\arc(t)}},
    & & \node \in \Vdh,\, t \in \Tint,
    \\
    \label{eq:distr-heat-temperature-mixing-out}
    \temp_\node(t) & = \temp_{\arc:\node}(t),
    & & \node \in \Vdh, \ \arc \in \outflowArcs(\node, t),\, t \in \Tint.
  \end{align}
\end{subequations}
Obviously, the discussed mixing model is only defined at
nodes~$\node$ with inflow, \ie, if
\begin{equation*}
  \sum_{\arc \in \inflowArcs(\node, t)} \abs{\mflow_\arc(t)} > 0.
\end{equation*}

Note further that the mixing model in
\eqref{eq:distr-heat-temperature-mixing} cannot be used directly in an
optimization context because the sets~$\inflowArcs(\node, t)$ and
$\outflowArcs(\node, t)$ depend on the solution and are thus not known
a priori.
In Sections~\ref{sec:an-mpcc-based} and~\ref{sec:nlp-mixing-model}, we
present a reformulation of the mixing model that deals with this
difficulty.

\subsection{Consumer and Depot Models}
\label{sec:consumers-depot}

Consumers at arcs~$\arc = (\node, \otherNode) \in \Ac$ are
modeled by
\begin{subequations}
  \label{eq:distr-heat-consumer}
  \begin{align}
    \label{eq:distr-heat-consumer-mass-flow}
    \vel_\arc(t) & \geq 0, & & t \in \Tint,\\
    \label{eq:distr-heat-consumer-power-usage}
    \Power_\arc(t)
    & = \mflow_\arc(t) \heatcap \left(\temp_{\arc:\node}(t) -
    \temp_{\arc:\otherNode}(t)\right),
    & & t \in \Tint,\\
    \label{eq:distr-heat-consumer-temperature-end}
    \temp_{\arc:\otherNode}(t) & = \tempbf, & & t \in \Tint,\\
    \label{eq:distr-heat-consumer-temperature-start}
    \temp_{\arc:\node}(t) & \geq \tempffarc, & & t \in \Tint,\\
    \pressure_{\arc:\otherNode}(t) & \leq \pressure_{\arc:\node}(t), & &  t \in \Tint,
  \end{align}
\end{subequations}
where $\Power_\arc(t)$ is the given power consumption of the consumer
$\arc \in \Ac$, $\tempbf$ is the contractually agreed temperature
of the water that flows into the backward-flow network, and $\tempffarc$ is
the minimum inlet water temperature of the consumer~$\arc \in \Ac$.
Later in our numerical experiments, we will relax the equality
constraint~\eqref{eq:distr-heat-consumer-temperature-end} to
$\temp_{\arc:\otherNode}(t) \in [\tempbf - \varepsilon,
\tempbf + \varepsilon]$ for a small $\varepsilon > 0$\rev{,
since this leads to a significantly improved convergence behavior of the
tested solvers in our numerical experiments}.

The depot at arc~$\arc = \ad = (\node,\otherNode)$ is modeled by
\begin{subequations}
  \label{eq:distr-heat-depot}
  \begin{align}
    \label{eq:distr-heat-depot-mass-flow}
    \vel_{\arc}(t) & \geq 0, & & t \in \Tint,\\
    \label{eq:distr-heat-depot-stagnation-pressure}
    \press_\node(t) & = \press_\stagnation, & & t \in \Tint,\\
    \label{eq:distr-heat-depot-pressure}
    \Powerpress(t) & =
    \frac{\mflow_\arc(t)}{\dens}
    \left(\press_{\arc:\otherNode}(t) - \press_{\arc:\node}(t)\right),
    & & t \in \Tint,\\
    \label{eq:distr-heat-depot-power-production}
    \Powerwaste(t) + \Powergas(t)
    & =
    \mflow_\arc(t) \heatcap \left(\temp_{\arc:\otherNode}(t) -
    \temp_{\arc:\node}(t)\right),
    & & t \in \Tint,\\
    \Abs{\dpar{t}{\Powerwaste}(t)}
    & \leq \xi_\Power,
    & & t \in \Tint,\label{eq:distr-heat-depot-P-gradient}\\
    \Abs{\dpar{t}{\temp_{\arc:\otherNode}}(t)}
    & \leq \xi_\temp,
    & & t \in \Tint,\label{eq:distr-heat-depot-T-gradient}
  \end{align}
\end{subequations}
where $\press_\stagnation$ is the so-called stagnation pressure of the
network.
\rev{Since all other physical and technical equations of the model are
stated in pressure differences, the fixation of one pressure value
leads to unique pressure values everywhere in the network,
which is the reason for introducing the stagnation pressure.}
In our implementation, we however will allow a variation in an
interval $\press_\node(t) \in [\press_\stagnation - \varepsilon,
\press_\stagnation + \varepsilon]$ instead\rev{; \cf the relaxation of
  the backward-flow temperature
  constraint~\eqref{eq:distr-heat-consumer-temperature-end} above}.
The power to run the pumps to realize a pressure increase in the depot of the
district heating network provider is denoted by $\Powerpress(t)$.
A temperature gain is obtained by thermal power production in the depot.
The corresponding equation \eqref{eq:distr-heat-depot-power-production} is
similar to the power consumption
equation~\eqref{eq:distr-heat-consumer-power-usage} for consumers,
where $\Powerwaste(t)$ and $\Powergas(t)$ describe the thermal power produced
by waste incineration and gas combustion, respectively.
Finally, \eqref{eq:distr-heat-depot-P-gradient} and
\eqref{eq:distr-heat-depot-T-gradient} bound the change over time of
the power from waste incineration as well as the change over time of
the depot's outflow temperature.

\subsection{Bounds, Objective Function, and Model Summary}
\label{sec:bounds-objective-model}

The different variables of the network that are used in the model are subject to the following bounds for all $t \in \Tint$,
\begin{subequations}
  \label{eq:distr-heat-bounds}
  \begin{align}
    \press_\node(t) & \in [\press_\node^-,\press_\node^+],
    \quad
    \temp_\node(t) \in [\temp_\node^-,\temp_\node^+],
    \quad \node \in\Vdh,
    \\
    \Powerwaste(t) & \in [0,\Powerwaste^+],
    \quad
    \Powergas(t) \in [0,\Powergas^+],
    \quad
    \Powerpress(t) \in [0,\Powerpress^+].
  \end{align}
\end{subequations}
The objective function to minimize is given by
\begin{equation}
  \label{eq:distr-heat-cost-function}
  \int_0^{\tend} \left(\costcoeffwaste \Powerwaste(\tau)
    + \costcoeffgas \Powergas(\tau) + \costcoeffpress \Powerpress(\tau) \right)
    \diff \tau,
\end{equation}
where $\costcoeffwaste$, $\costcoeffgas$, and $\costcoeffpress$ are
cost coefficients of the waste incineration, the gas combustion, and the
pumping power, respectively.
Here, we assume that these cost coefficients are constant over time.
However, time-dependent costs can also be considered in a similar manner.
Note that, in principle, other methods of thermal power production,
\eg, power-to-heat, can be modeled in an analogous way.

In summary, we obtain the following nonlinear optimization problem with PDE
constraints
\begin{equation}
  \label{eq:distr-heat-opt-problem}
  \begin{split}
    \min \quad & \eqref{eq:distr-heat-cost-function}\\
    \st \quad & \text{Incompressible Euler equation: }
    \eqref{eq:distr-heat-euler-momentum-const-density},\\
    & \text{Thermal energy equation: }
    \eqref{eq:distr-heat-energy},\\
    & \text{Mass balance equation: }
    \eqref{eq:distr-heat-mass-balance},\\
    & \text{Pressure continuity equations: }
    \eqref{eq:distr-heat-pressure-continuity},\\
    & \text{Temperature mixing equations: }
    \eqref{eq:distr-heat-temperature-mixing},\\
    & \text{Consumer constraints: }
    \eqref{eq:distr-heat-consumer},\\
    & \text{Depot constraints: }
    \eqref{eq:distr-heat-depot},\\
    & \text{Bounds: }
    \eqref{eq:distr-heat-bounds}.
  \end{split}
\end{equation}
Note that \eqref{eq:distr-heat-opt-problem} is a nonsmooth and
infinite-dimensional nonlinear optimization problem subject to PDEs
and algebraic constraints.
\rev{While the separate parts of the model such as the incompressible Euler
equations or the mixing models at nodes are known in the literature,
the novelty of the modeling discussed here is the combination of these
aspects that leads to a highly accurate representation of the physical
behavior.}

Since we want to solve \rev{the presented model} as an NLP, we apply a
first-discretize-then-optimize approach by using suitable finite
difference discretizations \rev{of} the differential
equations. This will be discussed in the next section.


\section{PDE Discretizations}
\label{sec:discretized-distr-heat-network}

In this section, we discuss the discretization in space and time via
finite difference schemes.

\subsection{Implicit Euler Discretization in Space and Time}
\label{sec:impl-euler-discr}

For the time discretization,  we partition the time horizon~$\Tint =
[0, \tend]$ equidistantly in~$N+1 \in \N$ time points
\begin{equation*}
  t_i \define \frac{i \tend}{N},
  \quad i \in \set{0,\dots,N}.
\end{equation*}
Thus, the length of the discretization intervals is $\timediff
\define \tend / N$.

For the discretization in space of pipe $\arc \in \Aff \cup \Abf$, we
use $M_\arc +1 \in \N$ discretization points
\begin{equation*}
  x_{\arc,k} \define \frac{k \length_\arc}{M_\arc},\quad
  k \in \set{0,\dots,M_\arc},
  \quad \text{and} \quad
  \spacediffarc \define \frac{\length_\arc}{M_\arc}.
\end{equation*}
To obtain a large stability region for the method, we use an implicit
Euler discretization for the momentum equation
\eqref{eq:distr-heat-euler-momentum-const-density}, which leads to the
difference equation
\begin{equation}
  \label{eq:distr-heat-euler-const-density-discr}
  \begin{split}
    \dens \frac{\vel_\arc(t_{i+1}) -
      \vel_\arc(t_{i})}{\timediff}
    + \frac{\press_\arc(\length_\arc, t_{i+1}) - \press_\arc(0,
      t_{i+1})}{\length_\arc}
    \quad \\
    +\, \grav \dens \slope_\arc
    + \lambda_\arc \frac{\abs{\vel_\arc(t_{i+1})}
      \vel_\arc(t_{i+1}) \dens}{2\diam_\arc}
    & = 0
  \end{split}
\end{equation}
for $\arc \in \Aff \cup \Abf$ and $i \in \set{0,\dots,N - 1}$.
Note that in the context of a forward simulation, to avoid the
solution of \rev{(large)} nonlinear systems, we could have also used
an explicit integration scheme for the momentum equation.
However, since we are using the discretization method within an
optimization model, the implicit discretization does not lead to
increased costs anyway.

For the spatial semi-discretization of the thermal energy equation
\eqref{eq:distr-heat-energy} we use an implicit Euler discretization,
yielding
\begin{equation*}
  \label{eq:distr-heat-energy-semi-discr}
  \begin{split}
    \dparlong{t}{\temp_\arc}(x_{\arc, k + 1},t)
    + \vel_\arc(t) \frac{\temp_\arc(x_{\arc, k + 1}, t) - \temp_\arc(x_{\arc,
        k}, t)}{\spacediffarc}
    \quad \\
    +\, \frac{4 \heattrans_\arc}{\heatcap \dens
      \diam_\arc}(\temp_\arc(x_{\arc, k + 1},t)-\soiltemp)
    & = 0
  \end{split}
\end{equation*}
for $\arc \in \Aff \cup \Abf$ and $k \in \set{0,\dots,M_\arc - 1}$.
Note that in the optimality conditions for the discretized
optimization problem, which form a boundary value problem, there is
no preferred space direction, so we will discuss an
alternative approach based on central differences in the next
section.

The time discretization of the space-discretized
thermal energy equation is again done in an implicit way via
\begin{equation}
  \label{eq:distr-heat-energy-discr}
  \begin{split}
    \frac{\temp_\arc(x_{\arc, k + 1},t_{i+1}) - \temp_\arc(x_{\arc, k +
        1},t_{i})}{\timediff}
    \quad \\
    +\, \vel_\arc(t_{i+1}) \frac{\temp_\arc(x_{\arc, k + 1}, t_{i+1}) -
      \temp_\arc(x_{\arc, k}, t_{i+1})}{\spacediffarc}
    \quad \\
    +\, \frac{4 \heattrans_\arc}{\heatcap \dens
      \diam_\arc}(\temp_\arc(x_{\arc, k + 1},t_{i+1})-\soiltemp)
    & = 0
  \end{split}
\end{equation}
for $\arc \in \Aff \cup \Abf$, $k \in \set{0,\dots,M_\arc - 1}$, and $i \in
\set{0,\dots,N - 1}$.
The differential depot
constraints~\eqref{eq:distr-heat-depot-P-gradient}
and~\eqref{eq:distr-heat-depot-T-gradient} are discretized as
\begin{equation*}
  \frac{\abs{\Powerwaste(t_{i+1}) - \Powerwaste(t_{i})}}{\timediff}
  \leq \xi_\Power,
  \quad
  \frac{\abs{\temp_{\arc:\otherNode}(t_{i+1}) - \temp_{\arc:\otherNode}(t_i)}}{\timediff}
  \leq \xi_\temp,
  \quad
  i = 0, \dotsc, N-1.
\end{equation*}

Discretizing the algebraic equations just means formulating them for each
discretization point in time. For example, the discretized version of the mass
balance equation \eqref{eq:distr-heat-mass-balance} reads
\begin{equation*}
  \label{eq:distr-heat-mass-balance-discr}
  \sum_{\arc \in \Inedges(\node)} \mflow_\arc(t_i) = \sum_{\arc
    \in \Outedges(\node)}
  \mflow_\arc(t_i),\quad \node \in \Vdh,\ i \in \set{0,\dots,N}.
\end{equation*}
Finally, discretizing the objective
function~\eqref{eq:distr-heat-cost-function} with the trapezoidal
rule, which is the appropriate discretization of the costs associated
with the space-time discretization that we have chosen, gives
\begin{equation}
  \label{eq:distr-heat-cost-function-discr}
  \frac{\Delta t}{2} \sum_{i=0}^{N-1}
  \costcoeffwaste (\Powerwaste(t_i) + \Powerwaste(t_{i+1}))
  + \costcoeffgas (\Powergas(t_i) + \Powergas(t_{i+1}))
  + \costcoeffpress (\Powerpress(t_i) + \Powerpress(t_{i+1})).
\end{equation}

\subsection{A Space Discretization Scheme based on Central Differences}
\label{sec:centr-discr-scheme}

Since in the discretized optimization problem there is no preferred
space direction, in this section we present an alternative spatial
discretization scheme using central differences.
Later in our numerical results, we then compare this scheme with
the implicit scheme of the last section.

Using the notation of Section~\ref{sec:impl-euler-discr}, \ie,
$t_i$, $i \in \set{0, \dotsc, N}$, for the discrete
time points and $x_{\arc, k}$, $k \in \set{0, \dotsc, M_\arc}$,
for the discrete points in space, we obtain the following discretized
system for $i = 0, \dotsc, N-1$ and $k = 1, \dotsc, M_\arc-1$ that
contains~\eqref{eq:distr-heat-euler-const-density-discr} and
\begin{equation}
  \label{eq:distr-heat-energy-central-discr}
  \begin{split}
    \frac{\temp_\arc(x_{\arc, k}, t_{i+1}) - \temp_\arc(x_{\arc, k},
      t_i)}{\timediff}
    \quad & \\
    +\, \vel_\arc(t_{i+1})
    \frac{\temp_\arc(x_{\arc, k+1}, t_{i+1}) - \temp_\arc(x_{\arc, k-1},
      t_{i+1})}{2 \spacediff}
    \quad & \\
    +\, \frac{4 \heattrans_\arc}{\heatcap \dens \diam_\arc}
    \left( \temp_\arc(x_{\arc, k}, t_{i+1}) - \soiltemp \right)
    & = 0.
  \end{split}
\end{equation}
Because the central difference scheme in
\eqref{eq:distr-heat-energy-central-discr} takes two spatial steps at
a time, we are missing one equation in every timestep.
Therefore, an additional discretization step is needed at the
beginning or the end of the pipe, where we arbitrarily choose the end
of the pipe:
\begin{equation}
  \label{eq:distr-heat-energy-central-last-discr}
  \begin{aligned}
    \frac{\temp_\arc(x_{\arc, M_\arc},t_{i+1}) - \temp_\arc(x_{\arc,
    M_\arc},t_i)}{\timediff}
    \quad & \\
    +\, \vel_\arc(t_{i+1}) \frac{\temp_\arc(x_{\arc, M_\arc}, t_{i+1}) -
      \temp_\arc(x_{\arc, M_\arc - 1}, t_{i+1})}{\spacediffarc}
    \quad & \\
    +\, \frac{4 \heattrans_\arc}{\heatcap \dens
      \diam_\arc}(\temp_\arc(x_{\arc, M_\arc},t_{i+1})-\soiltemp)
    & = 0.
  \end{aligned}
\end{equation}
Note that we do not discretize the continuity equation since it simply
states that velocities only depend on time and not on space.
Finally, the algebraic constraints and the objective function are discretized
as in the last section.


\section{Mixing Models}
\label{sec:mixing-models}

As already mentioned in Section~\ref{sec:modeling}, the mixing model
originally is not well-posed since it is based on arc sets that are
not known a priori.
To handle this issue, we present two different reformulations that we
later compare numerically in Section~\ref{sec:numerical-results}.

\subsection{A Complementarity-Constrained Temperature Mixing Model}
\label{sec:an-mpcc-based}

The sets~$\inflowArcs(\node, t)$ and $\outflowArcs(\node, t)$ used in the
temperature mixing constraints~\eqref{eq:distr-heat-temperature-mixing} of
Problem~\eqref{eq:distr-heat-opt-problem} are not known a priori,
which makes it difficult to use them in an optimization model.
We resolve this problem by replacing them with nonsmooth $\max$-constraints
introduced in \cite{Hante_Schmidt:2017} for a similar setting in gas
transport networks.
The newly introduced variable
\begin{equation}
  \label{eq:distr-heat-positive-mass-flow}
  \posmflow_\arc(t) \define \max\set{0, \mflow_\arc(t)},
  \quad
  \arc \in \Aff \cup \Abf,
\end{equation}
models the positive part of the mass flow~$\mflow_\arc(t)$ of arc
$\arc$.
This is equivalent to
\begin{equation*}
  \label{eq:distr-heat-negative-mass-flow}
  \posmflow_\arc(t) - \mflow_\arc(t) = \max\set{0,
  -\mflow_\arc(t)},
  \quad \arc \in \Aff \cup \Abf.
\end{equation*}
The variable~$\negmflow_\arc(t) \define \posmflow_\arc(t) -
\mflow_\arc(t)$
thus models the negative part of the mass flow~$\mflow_\arc(t)$.
For each node $\node \in \Nodes$ and all $t \in \Tint$, then the following implications are satisfied,
\begin{align*}
  \arc \in \inflowArcs(\node, t) \cap \Inedges(\node) \implies &
  \posmflow_\arc(t) = \mflow_\arc(t), \
  \negmflow_\arc(t) = 0,\\
  \arc \in \outflowArcs(\node, t) \cap \Inedges(\node) \implies &
  \posmflow_\arc(t) = 0, \ \negmflow_\arc(t) =
  -\mflow_\arc(t),\\
  \arc \in \inflowArcs(\node, t) \cap \Outedges(\node) \implies &
  \posmflow_\arc(t) = 0, \ \negmflow_\arc(t) =
  -\mflow_\arc(t),\\
  \arc \in \outflowArcs(\node, t) \cap \Outedges(\node) \implies &
  \posmflow_\arc(t) = \mflow_\arc(t), \
  \negmflow_\arc(t) = 0.
\end{align*}
We can thus reformulate the temperature mixing equations
\eqref{eq:distr-heat-temperature-mixing} at node~$\node \in \Vdh$
without explicitly using the sets~$\inflowArcs(\node, t)$ and
$\outflowArcs(\node, t)$ and obtain
\begin{subequations}
   \label{eq:distr-heat-temperature-mixing-complementary}
  \begin{align}
    \label{eq:distr-heat-temperature-mixing-in-complementary}
    \temp_\node(t) & = \frac{\sum_{\arc \in \Inedges(\node)}
    \posmflow_\arc(t) \temp_{\arc:\node}(t)
    + \sum_{\arc \in \Outedges(\node)}
    \negmflow_\arc(t) \temp_{\arc:\node}(t)}
    {\sum_{\arc \in \Inedges(\node)} \posmflow_\arc(t)
    + \sum_{\arc \in \Outedges(\node)} \negmflow_\arc(t)},
    \\
    \label{eq:distr-heat-temperature-mixing-out-complementary-out}
    0 & = \posmflow_\arc(t)(\temp_{\arc:\node}(t) - \temp_\node(t)),
    & & \arc \in \Outedges(\node),
    \\
    \label{eq:distr-heat-temperature-mixing-out-complementary-in}
    0 & = \negmflow_\arc(t)(\temp_{\arc:\node}(t) - \temp_\node(t)),
    & & \arc \in \Inedges(\node),
  \end{align}
\end{subequations}
for all $t \in \Tint$.
In Lemma~1 of \cite{Hante_Schmidt:2017}, it is shown that
Condition~\eqref{eq:distr-heat-positive-mass-flow} is equivalent to
the complementarity-constrained model
\begin{equation}
  \label{eq:mpcc-max-reform}
  \mflow_\arc(t) = \posmflow_\arc(t) -
  \negmflow_\arc(t),
  \quad \posmflow_\arc(t) \geq 0,
  \quad \negmflow_\arc(t) \geq 0,
  \quad \posmflow_\arc(t) \negmflow_\arc(t) = 0
\end{equation}
for $\node \in \Vdh$ and $\arc \in \nodeArcs(\node)$.
This is a classical mathematical program with complementarity constraints (MPCC) formulation, since for all $\node \in
\Vdh$, $\arc \in \nodeArcs(\node)$, and $t \in \Tint$, the positive
mass flow $\posmflow_\arc(t)$ or the negative mass flow
$\negmflow_\arc(t)$ is equal to zero.
Thus, $\posmflow_\arc(t)$ and $\negmflow_\arc(t)$ form
a complementarity pair.

Using this constraint, we obtain the finite-dimensional MPCC model
\begin{equation}
  \label{eq:distr-heat-opt-problem-discr}
  \begin{split}
    \min \quad & \eqref{eq:distr-heat-cost-function-discr}\\
    \st \quad & \text{Discretized Euler equation: }
    \eqref{eq:distr-heat-euler-const-density-discr}, \\
    & \text{Discretized thermal energy equation: }
    \eqref{eq:distr-heat-energy-discr} \text{ or }
    \eqref{eq:distr-heat-energy-central-discr} \text{ and }
    \eqref{eq:distr-heat-energy-central-last-discr},\\
    & \text{Discretized mass balance equation: }
    \eqref{eq:distr-heat-mass-balance},\\
    & \text{Discretized pressure continuity equations: }
    \eqref{eq:distr-heat-pressure-continuity},\\
    & \text{Discretized temperature mixing equations: }
    \eqref{eq:distr-heat-temperature-mixing-complementary},\\
    & \text{Discretized MPCC max-reformulation: }
    \eqref{eq:mpcc-max-reform},\\
    & \text{Discretized consumer constraints: }
    \eqref{eq:distr-heat-consumer},\\
    & \text{Discretized depot constraints: }
    \eqref{eq:distr-heat-depot},\\
    & \text{Discretized  bounds: }
    \eqref{eq:distr-heat-bounds}
  \end{split}
\end{equation}
for optimizing the control of the district heating network,
which is equivalent to a discretized version of the original
problem~\eqref{eq:distr-heat-opt-problem}.

In general, MPCCs are hard to solve, since they usually do not satisfy
standard constraint qualifications of nonlinear optimization
\cite{Hoheisel_et_al:2013}.
To see this, consider the complementarity
constraints~\eqref{eq:mpcc-max-reform}.
If $\posmflow_\arc(t) = \negmflow_\arc(t) = 0$
holds, \ie, if there is no flow, then the tangential
cone of \eqref{eq:distr-heat-opt-problem-discr} restricted to the
constraints~\eqref{eq:mpcc-max-reform} is nonconvex.
In this case, the tangential cone cannot coincide with the linearized
tangential cone, because the latter cone is always convex.
Thus, the Abadie constraint qualification (ACQ) is not
satisfied; see, \eg, \cite{bonnans2006numerical} for some details on
constraint qualifications.


\subsection{A Nonlinear Programming Based Temperature Mixing Model}
\label{sec:nlp-mixing-model}

Some of our preliminary numerical experiments showed that the MPCC-based
formulation of the mixing model tends to be hard to solve for standard
NLP solvers. For this reason, in this section
we develop a reformulation for which we later demonstrate that it has
better numerical properties.

The thermal energy balance equation in the nodes given by
\begin{equation*}
  \sum_{\arc \in \Inedges(\node)}
  \mflow_\arc(t) \temp_{\arc:\node}(t) \heatcap =
  \sum_{\arc \in \Outedges(\node)}
  \mflow_\arc(t) \temp_{\arc:\node}(t) \heatcap,
  \quad \node \in \Vdh,\, t \in \Tint
\end{equation*}
ensures that no thermal energy is added or lost in the mixing process.
Assuming that the specific heat capacity $\heatcap$ of water is
constant, we can rewrite these equations as
\begin{equation}
  \label{eq:distr-heat-energy-balance-const-cp}
  \sum_{\arc \in \Inedges(\node)}
  \mflow_\arc(t) \temp_{\arc:\node}(t) =
  \sum_{\arc \in \Outedges(\node)}
  \mflow_\arc(t) \temp_{\arc:\node}(t),
  \quad \node \in \Vdh,\, t \in \Tint.
\end{equation}
However, only formulating the thermal energy balance is not sufficient
to get a complete mixing model, since multiple outflow arcs still could
have different temperatures after mixing.
To prevent this, we explicitly include the temperature propagation
equations at the nodes, which equate the temperatures of all outflow
arcs with the mixed node temperature,
\begin{subequations}
  \label{eq:distr-heat-out-temp-propagation}
  \begin{align}
    \label{eq:distr-heat-out-temp-propagation-out}
    \vel_{\arc}(t) \abs{\temp_{\arc:\node}(t) - \temp_\node(t)} & \leq 0,
    & \node \in \Vdh, \ \arc \in \Outedges(\node),\ t \in \Tint,\\
    \label{eq:distr-heat-out-temp-propagation-in}
    \vel_{\arc}(t) \abs{\temp_{\arc:\node}(t) - \temp_\node(t)} & \geq 0,
    & \node \in \Vdh, \ \arc \in \Inedges(\node),\ t \in \Tint.
  \end{align}
\end{subequations}
For $\arc \in \inflowArcs(\node, t)$, these inequalities are always
fulfilled independent of the absolute value of the temperature
difference $\abs{\temp_{\arc:\node}(t) - \temp_\node(t)}$.
For $\arc \in \outflowArcs(\node, t)$, the inequalities are
only satisfied if $\abs{\temp_{\arc:\node}(t) - \temp_\node(t)} = 0$
holds.
See also \cite{Borsche_et_al:2018}, where a similar model is used in a
simulation model with known flow directions.
The following theorem shows that this reformulation is equivalent to the
original one.
\begin{theorem}
  Suppose that all nodes have a positive inflow, \ie,
  \begin{equation*}
    \sum_{\arc \in \inflowArcs(\node, t)} \abs{\mflow_\arc(t)}
    > 0,
    \quad
    \node \in \Nodes.
  \end{equation*}
  Then, the mixing model \eqref{eq:distr-heat-energy-balance-const-cp} and
  \eqref{eq:distr-heat-out-temp-propagation} is an equivalent reformulation of
  the mixing equations \eqref{eq:distr-heat-temperature-mixing}.
\end{theorem}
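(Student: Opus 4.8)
The plan is to show set-equality of solutions by establishing two implications, proving that a point $(\temp_\node, \temp_{\arc:\node}, \mflow_\arc)$ satisfies the original mixing equations~\eqref{eq:distr-heat-temperature-mixing} if and only if it satisfies the reformulated system~\eqref{eq:distr-heat-energy-balance-const-cp}--\eqref{eq:distr-heat-out-temp-propagation}. Throughout, I fix a node $\node \in \Vdh$ and a time $t \in \Tint$, and I use the sign of $\mflow_\arc(t) = \area_\arc \dens \vel_\arc(t)$ to split the incident arcs into the inflow set $\inflowArcs(\node,t)$ and the outflow set $\outflowArcs(\node,t)$, exactly as in Section~\ref{sec:nodal-coupling-equations}. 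The sign conventions differ between in-edges and out-edges, so the bookkeeping must respect whether $\arc \in \Inedges(\node)$ or $\arc \in \Outedges(\node)$; this is routine but must be done carefully.

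\emph{Forward direction.} First I would assume~\eqref{eq:distr-heat-temperature-mixing} holds. The outflow-propagation equation~\eqref{eq:distr-heat-temperature-mixing-out} gives $\temp_{\arc:\node}(t) = \temp_\node(t)$ for every $\arc \in \outflowArcs(\node,t)$, so the temperature difference vanishes and both inequalities~\eqref{eq:distr-heat-out-temp-propagation} are satisfied with equality; for $\arc \in \inflowArcs(\node,t)$ the inequalities hold automatically because the sign of $\vel_\arc(t)$ matches the required direction (nonnegative for in-edges, nonpositive for out-edges, by definition of the inflow set). It remains to derive the energy balance~\eqref{eq:distr-heat-energy-balance-const-cp}. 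I would start from the mixing formula~\eqref{eq:distr-heat-temperature-mixing-in}, multiply through by the denominator $\sum_{\arc \in \inflowArcs(\node,t)} \abs{\mflow_\arc(t)}$ (which is positive by hypothesis), and then substitute $\temp_{\arc:\node}(t) = \temp_\node(t)$ on the outflow arcs to replace the outflow contribution. Splitting $\Inedges$ and $\Outedges$ according to the flow sign and using $\abs{\mflow_\arc} = \mflow_\arc$ on positive-flow arcs and $\abs{\mflow_\arc} = -\mflow_\arc$ on negative-flow arcs should collapse the identity into~\eqref{eq:distr-heat-energy-balance-const-cp}; the mass balance~\eqref{eq:distr-heat-mass-balance} is the algebraic fact that makes the denominator terms cancel correctly.

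\emph{Reverse direction.} Conversely, assume~\eqref{eq:distr-heat-energy-balance-const-cp} and~\eqref{eq:distr-heat-out-temp-propagation} hold. The key observation is that~\eqref{eq:distr-heat-out-temp-propagation} forces $\temp_{\arc:\node}(t) = \temp_\node(t)$ for each $\arc \in \outflowArcs(\node,t)$: on such an arc $\vel_\arc(t) \neq 0$ with the sign making the product strictly of the wrong sign unless the absolute value is zero, so the only way both inequalities can hold is $\abs{\temp_{\arc:\node}(t) - \temp_\node(t)} = 0$. This immediately recovers the outflow equation~\eqref{eq:distr-heat-temperature-mixing-out}. To recover the mixing formula~\eqref{eq:distr-heat-temperature-mixing-in}, I would take the energy balance~\eqref{eq:distr-heat-energy-balance-const-cp}, move all outflow terms to one side and all inflow terms to the other, substitute $\temp_{\arc:\node}(t) = \temp_\node(t)$ on every outflow arc, and factor out $\temp_\node(t)$ from the outflow side. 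Using the mass balance~\eqref{eq:distr-heat-mass-balance} to rewrite the total outflow mass as the total inflow mass, I can solve for $\temp_\node(t)$ and arrive at~\eqref{eq:distr-heat-temperature-mixing-in} after dividing by the (positive) inflow mass sum.

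\emph{Main obstacle.} The genuinely delicate point is the sign-and-set bookkeeping when translating between the signed $\mflow_\arc$ appearing in~\eqref{eq:distr-heat-energy-balance-const-cp} and the unsigned $\abs{\mflow_\arc}$ appearing in~\eqref{eq:distr-heat-temperature-mixing-in}, since an in-edge with negative flow belongs to $\outflowArcs(\node,t)$ while an out-edge with negative flow belongs to $\inflowArcs(\node,t)$. I would therefore handle the boundary case $\mflow_\arc(t) = 0$ explicitly: such arcs lie in $\inflowArcs(\node,t)$ by the chosen (non-strict) convention, contribute nothing to either sum, and leave~\eqref{eq:distr-heat-out-temp-propagation} trivially satisfied, so they do not affect the argument. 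The positive-inflow hypothesis is used precisely to guarantee the denominator in~\eqref{eq:distr-heat-temperature-mixing-in} is nonzero, so that the mixing formula is well-defined and the division step in the reverse direction is legitimate.
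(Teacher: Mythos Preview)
Your proposal is correct and follows essentially the same approach as the paper's proof: both directions rely on the sign-based splitting of $\Inedges(\node)$ and $\Outedges(\node)$ into $\inflowArcs(\node,t)$ and $\outflowArcs(\node,t)$, use~\eqref{eq:distr-heat-out-temp-propagation} to force $\temp_{\arc:\node}(t)=\temp_\node(t)$ on outflow arcs, and combine the energy balance with the mass balance~\eqref{eq:distr-heat-mass-balance} to pass between~\eqref{eq:distr-heat-energy-balance-const-cp} and~\eqref{eq:distr-heat-temperature-mixing-in}. The paper carries out the sign bookkeeping you flag as the main obstacle explicitly at the outset (deriving the identities~\eqref{eq:distr-heat-mass-balance-in-out-flow} and~\eqref{eq:distr-heat-energy-balance-const-cp-in-out-flow}), but the logical skeleton is the same.
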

\begin{proof}
  Let $\node \in \Vdh$.
  We rewrite the mass balance equation
  \eqref{eq:distr-heat-mass-balance} using
  inflow- and outflow-arcs and obtain
  \begin{equation}
    \label{eq:distr-heat-mass-balance-in-out-flow}
    \begin{aligned}
      0 = & \sum_{\arc \in \Inedges(\node)} \mflow_\arc(t)
      - \sum_{\arc \in \Outedges(\node)} \mflow_\arc(t)\\
      = & \left(\sum_{\arc \in \Inedges(\node) \cap \inflowArcs(\node, t)}
        \mflow_\arc(t)
        - \sum_{\arc \in \Outedges(\node) \cap \inflowArcs(\node, t)}
        \mflow_\arc(t)\right)\\
      & + \left(\sum_{\arc \in \Inedges(\node) \cap \outflowArcs(\node, t)}
        \mflow_\arc(t)
        - \sum_{\arc \in \Outedges(\node) \cap \outflowArcs(\node, t)}
        \mflow_\arc(t)\right)\\
      = & \sum_{\arc \in \inflowArcs(\node, t)} \abs{\mflow_\arc(t)}
      - \sum_{\arc \in \outflowArcs(\node, t)} \abs{\mflow_\arc(t)}.
    \end{aligned}
  \end{equation}
  The same ideas applied to the thermal energy balance
  equation~\eqref{eq:distr-heat-energy-balance-const-cp} lead to
  \begin{equation}
    \label{eq:distr-heat-energy-balance-const-cp-in-out-flow}
    \begin{aligned}
      0 = & \sum_{\arc \in \Inedges(\node)}
      \mflow_\arc(t) \temp_{\arc:\node}(t)
      -\sum_{\arc \in \Outedges(\node)}
      \mflow_\arc(t) \temp_{\arc:\node}(t)\\
      = & \left(\sum_{\arc \in \Inedges(\node) \cap \inflowArcs(\node, t)}
        \mflow_\arc(t) \temp_{\arc:\node}(t)
        - \sum_{\arc \in \Outedges(\node) \cap \inflowArcs(\node, t)}
        \mflow_\arc(t) \temp_{\arc:\node}(t)\right)\\
      & + \left(\sum_{\arc \in \Inedges(\node) \cap \outflowArcs(\node, t)}
        \mflow_\arc(t) \temp_{\arc:\node}(t)
        - \sum_{\arc \in \Outedges(\node) \cap \outflowArcs(\node, t)}
        \mflow_\arc(t) \temp_{\arc:\node}(t)\right)\\
      = & \sum_{\arc \in \inflowArcs(\node, t)} \abs{\mflow_\arc(t)}
      \temp_{\arc:\node}(t)
      - \sum_{\arc \in \outflowArcs(\node, t)} \abs{\mflow_\arc(t)}
      \temp_{\arc:\node}(t).
    \end{aligned}
  \end{equation}
  We now assume that the mixing
  equations~\eqref{eq:distr-heat-temperature-mixing} hold.
  Using \eqref{eq:distr-heat-mass-balance-in-out-flow}, we obtain
  \begin{align*}
    0 = & \left(\sum_{\arc \in \inflowArcs(\node, t)} \abs{\mflow_\arc(t)}
          - \sum_{\arc \in \outflowArcs(\node, t)} \abs{\mflow_\arc(t)}\right)
          \temp_\node(t)\\
    = & \left(\sum_{\arc \in \inflowArcs(\node, t)}
        \abs{\mflow_\arc(t)}\right) \temp_\node(t)
        - \left(\sum_{\arc \in \outflowArcs(\node, t)}
        \abs{\mflow_\arc(t)} \temp_\node(t)\right)\\
    = & \left(\sum_{\arc \in \inflowArcs(\node, t)}
        \abs{\mflow_\arc(t)}\right)
        \frac{\sum_{\arc \in \inflowArcs(\node, t)}
        \abs{\mflow_\arc(t)} \temp_{\arc:\node}(t)}
        {\sum_{\arc \in \inflowArcs(\node, t)} \abs{\mflow_\arc(t)}}
        - \sum_{\arc \in \outflowArcs(\node, t)} \abs{\mflow_\arc(t)}
        \temp_{\arc:\node}(t)\\
    = & \sum_{\arc \in \inflowArcs(\node, t)} \abs{\mflow_\arc(t)}
        \temp_{\arc:\node}(t)
        - \sum_{\arc \in \outflowArcs(\node, t)} \abs{\mflow_\arc(t)}
        \temp_{\arc:\node}(t),
  \end{align*}
  which implies the thermal energy balance
  equation~\eqref{eq:distr-heat-energy-balance-const-cp} by using
  \eqref{eq:distr-heat-energy-balance-const-cp-in-out-flow}.

  Consider now an arc~$\arc \in \Outedges(\node)$.
  Then, the temperature propagation
  equation~\eqref{eq:distr-heat-out-temp-propagation-out} is satisfied
  by using~\eqref{eq:distr-heat-temperature-mixing-out},
  \begin{align*}
    \vel_{\arc}(t) \abs{\temp_{\arc:\node}(t) - \temp_\node(t)}
    & = 0
    \quad \text{if } \arc \in \outflowArcs(\node,t),\\
    \underbrace{\vel_{\arc}(t)}_{\leq 0}
    \underbrace{\abs{\temp_{\arc:\node}(t) - \temp_\node(t)}}_{\geq 0}
    & \leq 0 \quad \text{if } \arc \in \inflowArcs(\node,t).
  \end{align*}
  For an arc~$\arc \in \Inedges(\node)$, the temperature propagation
  equation~\eqref{eq:distr-heat-out-temp-propagation-in} is also
  fulfilled
  \begin{align*}
    \vel_{\arc}(t) \abs{\temp_{\arc:\node}(t) - \temp_\node(t)}
    & = 0 \quad \text{if } \arc \in \outflowArcs(\node, t),\\
    \underbrace{\vel_{\arc}(t)}_{\geq 0}
    \underbrace{\abs{\temp_{\arc:\node}(t) - \temp_\node(t)}}_{\geq 0}
    & \geq 0 \quad \text{if } \arc \in \inflowArcs(\node, t),
  \end{align*}
and hence, we have shown the first implication.

  For the reverse implication, we assume that
  \eqref{eq:distr-heat-energy-balance-const-cp} and
  \eqref{eq:distr-heat-out-temp-propagation} hold.
For $\arc \in \outflowArcs(\node, t)$, because of
  \eqref{eq:distr-heat-out-temp-propagation}, we have
  \begin{equation*}
    \begin{aligned}
      \underbrace{\vel_{\arc}(t)}_{>0} \abs{\temp_{\arc:\node}(t) -
        \temp_\node(t)}
      & \leq 0 \quad \text{if } \arc \in \Outedges(\node),\\
      \underbrace{\vel_{\arc}(t)}_{<0} \abs{\temp_{\arc:\node}(t) -
        \temp_\node(t)}
      & \geq 0 \quad \text{if } \arc \in \Inedges(\node).
    \end{aligned}
  \end{equation*}
  Thus, $\abs{\temp_{\arc:\node}(t) - \temp_\node(t)} = 0$ holds, which
  implies \eqref{eq:distr-heat-temperature-mixing-out}.
Then, we use the thermal energy balance
  equation~\eqref{eq:distr-heat-energy-balance-const-cp} to prove
  \eqref{eq:distr-heat-temperature-mixing-in}
  \begin{equation*}
    \begin{aligned}
      0 = & \sum_{\arc \in \Inedges(\node)}
      \mflow_\arc(t) \temp_{\arc:\node}(t)
      -\sum_{\arc \in \Outedges(\node)}
      \mflow_\arc(t) \temp_{\arc:\node}(t)\\
      = & \sum_{\arc \in \inflowArcs(\node, t)} \abs{\mflow_\arc(t)}
      \temp_{\arc:\node}(t)
      - \sum_{\arc \in \outflowArcs(\node, t)} \abs{\mflow_\arc(t)}
      \temp_{\arc:\node}(t)\\
      = & \sum_{\arc \in \inflowArcs(\node, t)} \abs{\mflow_\arc(t)}
      \temp_{\arc:\node}(t)
      - \sum_{\arc \in \outflowArcs(\node, t)}
      \abs{\mflow_\arc(t)} \temp_\node(t)\\
      = & \left(\sum_{\arc \in \inflowArcs(\node, t)}
        \abs{\mflow_\arc(t)}\right)
      \frac{\sum_{\arc \in \inflowArcs(\node, t)}
        \abs{\mflow_\arc(t)} \temp_{\arc:\node}(t)}
      {\sum_{\arc \in \inflowArcs(\node, t)} \abs{\mflow_\arc(t)}}
      - \left(\sum_{\arc \in \outflowArcs(\node, t)}
        \abs{\mflow_\arc(t)}\right) \temp_\node(t)\\
      = & \left(\sum_{\arc \in \inflowArcs(\node, t)}
        \abs{\mflow_\arc(t)}\right)
      \frac{\sum_{\arc \in \inflowArcs(\node, t)}
        \abs{\mflow_\arc(t)} \temp_{\arc:\node}(t)}
      {\sum_{\arc \in \inflowArcs(\node, t)} \abs{\mflow_\arc(t)}}
      - \left(\sum_{\arc \in \inflowArcs(\node, t)}
        \abs{\mflow_\arc(t)}\right) \temp_\node(t)\\
      = & \left(\sum_{\arc \in \inflowArcs(\node, t)}
        \abs{\mflow_\arc(t)}\right)
      \left(\frac{\sum_{\arc \in \inflowArcs(\node, t)}
          \abs{\mflow_\arc(t)} \temp_{\arc:\node}(t)}
        {\sum_{\arc \in \inflowArcs(\node, t)} \abs{\mflow_\arc(t)}}
        - \temp_\node(t)\right).
    \end{aligned}
  \end{equation*}
  Since
  \begin{equation*}
    \sum_{\arc \in \inflowArcs(\node, t)} \abs{\mflow_\arc(t)} > 0
  \end{equation*}
  holds by assumption, the mixing
  equation~\eqref{eq:distr-heat-temperature-mixing-in} follows.
\end{proof}

By introducing a new variable~$\tempdiffvar_{\arc, \node}$ for
all~$\node \in \Vdh$ and~$\arc \in \nodeArcs(\node)$ one can
rewrite~\eqref{eq:distr-heat-out-temp-propagation} to also avoid
absolute values in the equations:
\begin{subequations}
  \label{eq:distr-heat-out-temp-continuity-no-abs}
  \begin{align}
    \label{eq:distr-heat-out-temp-continuity-no-abs-out}
    \vel_{\arc}(t) \tempdiffvar_{\arc, \node}(t) & \leq 0,
    & \node \in \Vdh, \arc \in \Outedges(\node),\\
    \label{eq:distr-heat-out-temp-continuity-no-abs-out-pos}
    \tempdiffvar_{\arc, \node}(t) & \geq \temp_{\arc:\node}(t) - \temp_\node(t),
    & \node \in \Vdh, \arc \in \Outedges(\node),\\
    \label{eq:distr-heat-out-temp-continuity-no-abs-out-neg}
    \tempdiffvar_{\arc, \node}(t) & \geq \temp_\node(t) - \temp_{\arc:\node}(t),
    & \node \in \Vdh, \arc \in \Outedges(\node),\\
    \label{eq:distr-heat-out-temp-continuity-no-abs-in}
    \vel_{\arc}(t) \tempdiffvar_{\arc, \node}(t) & \geq 0,
    & \node \in \Vdh, \arc \in \Inedges(\node),\\
    \label{eq:distr-heat-out-temp-continuity-no-abs-in-pos}
    \tempdiffvar_{\arc, \node}(t) & \geq \temp_{\arc:\node}(t) -
                                    \temp_\node(t),  & \node \in \Vdh, \arc \in \Inedges(\node),\\
    \label{eq:distr-heat-out-temp-continuity-no-abs-in-neg}
    \tempdiffvar_{\arc, \node}(t) & \geq \temp_\node(t) -
                                    \temp_{\arc:\node}(t),  & \node \in \Vdh, \arc \in \Inedges(\node).
  \end{align}
\end{subequations}
We have the following result.
\begin{theorem}
  System~\eqref{eq:distr-heat-out-temp-continuity-no-abs} is feasible
  if and only if the temperature propagation
  equations~\eqref{eq:distr-heat-out-temp-propagation} are feasible.
\end{theorem}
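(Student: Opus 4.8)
The plan is to reduce each block of System~\eqref{eq:distr-heat-out-temp-continuity-no-abs} to a single, more transparent condition on the auxiliary variable. The key observation is that the two ``epigraph'' inequalities in each block, namely~\eqref{eq:distr-heat-out-temp-continuity-no-abs-out-pos}--\eqref{eq:distr-heat-out-temp-continuity-no-abs-out-neg} (and likewise~\eqref{eq:distr-heat-out-temp-continuity-no-abs-in-pos}--\eqref{eq:distr-heat-out-temp-continuity-no-abs-in-neg}) are jointly equivalent to the single bound
\[
  \tempdiffvar_{\arc,\node}(t) \geq \abs{\temp_{\arc:\node}(t) - \temp_\node(t)} \geq 0.
\]
Hence $\tempdiffvar_{\arc,\node}(t)$ is forced to be a nonnegative over-estimate of the magnitude of the temperature difference, and this reduction drives both implications.

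For the direction ``\eqref{eq:distr-heat-out-temp-propagation} feasible $\Rightarrow$ \eqref{eq:distr-heat-out-temp-continuity-no-abs} feasible,'' I would take any $\vel$, $\temp$ satisfying the propagation equations and simply set $\tempdiffvar_{\arc,\node}(t) \define \abs{\temp_{\arc:\node}(t) - \temp_\node(t)}$. Then the two epigraph inequalities hold with equality, and substituting this choice into the velocity-product constraints~\eqref{eq:distr-heat-out-temp-continuity-no-abs-out} and~\eqref{eq:distr-heat-out-temp-continuity-no-abs-in} reproduces verbatim the propagation inequalities~\eqref{eq:distr-heat-out-temp-propagation-out} and~\eqref{eq:distr-heat-out-temp-propagation-in}, which hold by assumption. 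Thus the constructed point is feasible.

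For the reverse direction I would fix $\node \in \Vdh$, an arc $\arc \in \nodeArcs(\node)$, and $t \in \Tint$, and argue by a case distinction on the sign of $\vel_\arc(t)$. Take $\arc \in \Outedges(\node)$, where the goal is $\vel_\arc(t)\abs{\temp_{\arc:\node}(t)-\temp_\node(t)} \leq 0$. If $\vel_\arc(t) \leq 0$, this is immediate since the absolute value is nonnegative. If $\vel_\arc(t) > 0$, then~\eqref{eq:distr-heat-out-temp-continuity-no-abs-out} forces $\tempdiffvar_{\arc,\node}(t) \leq 0$, which together with the bound $\tempdiffvar_{\arc,\node}(t) \geq \abs{\temp_{\arc:\node}(t)-\temp_\node(t)} \geq 0$ yields $\abs{\temp_{\arc:\node}(t)-\temp_\node(t)} = 0$, so the product vanishes. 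The case $\arc \in \Inedges(\node)$ is completely symmetric, using~\eqref{eq:distr-heat-out-temp-continuity-no-abs-in} and the required sign $\geq 0$.

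I do not anticipate a genuine obstacle, as the argument is elementary. The only point requiring a little care is the reverse direction: there the auxiliary variable $\tempdiffvar_{\arc,\node}(t)$ may strictly exceed $\abs{\temp_{\arc:\node}(t)-\temp_\node(t)}$, so one cannot simply cancel it against the absolute value. The case distinction on the sign of $\vel_\arc(t)$ is precisely what resolves this, since whenever the velocity carries the ``wrong'' sign for the velocity-product constraint, that constraint squeezes the over-estimate $\tempdiffvar_{\arc,\node}(t)$ down to zero and thereby forces the true temperature difference to zero as well.
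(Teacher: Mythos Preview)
Your proposal is correct and follows essentially the same approach as the paper: both reduce the epigraph pairs to the single bound $\tempdiffvar_{\arc,\node}(t) \geq \abs{\temp_{\arc:\node}(t)-\temp_\node(t)}$, construct the forward direction by choosing $\tempdiffvar_{\arc,\node}(t) \define \abs{\temp_{\arc:\node}(t)-\temp_\node(t)}$, and handle the reverse direction via the dichotomy that either the velocity has the harmless sign or the product constraint forces $\tempdiffvar_{\arc,\node}(t)=0$ and hence the temperature difference vanishes. The only cosmetic difference is that the paper phrases the reverse-direction split as ``either $\vel_\arc(t)\leq 0$ or $\tempdiffvar_{\arc,\node}(t)=0$'' whereas you split directly on $\sign(\vel_\arc(t))$; the content is identical.
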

\begin{proof}
  It is easy to see that
  \eqref{eq:distr-heat-out-temp-continuity-no-abs-out-pos} and
  \eqref{eq:distr-heat-out-temp-continuity-no-abs-out-neg} are
  smooth and linear reformulations of
  \begin{equation*}
    \tempdiffvar_{\arc, \node}(t) \geq \abs{\temp_{\arc:\node}(t) -
      \temp_\node(t)},
    \quad \node \in \Vdh, \ \arc \in \Outedges(\node),
  \end{equation*}
  and \eqref{eq:distr-heat-out-temp-continuity-no-abs-in-pos} and
  \eqref{eq:distr-heat-out-temp-continuity-no-abs-in-neg} are  smooth
  and linear reformulations of
  \begin{equation*}
    \tempdiffvar_{\arc, \node}(t) \geq \abs{\temp_{\arc:\node}(t) -
      \temp_\node(t)},
    \quad \node \in \Vdh, \ \arc \in \Inedges(\node).
  \end{equation*}
  Suppose now that \eqref{eq:distr-heat-out-temp-propagation} is
  feasible.   Then,
  \begin{equation*}
    \tempdiffvar_{\arc, \node}(t)
    \define \abs{\temp_{\arc:\node}(t) -
    \temp_\node(t)},
    \quad \node \in \Vdh, \arc \in \Outedges(\node) \cup
    \Inedges(\node),
  \end{equation*}
  satisfy \eqref{eq:distr-heat-out-temp-continuity-no-abs}.

  Next, assume that \eqref{eq:distr-heat-out-temp-continuity-no-abs}
  is feasible. For a node $\node \in \Vdh$ and an outgoing arc
  \mbox{$\arc \in \Outedges(\node)$}, we have
  $\vel_{\arc}(t) \tempdiffvar_{\arc, \node}(t) \leq 0$
  by \eqref{eq:distr-heat-out-temp-continuity-no-abs-out}.
  Thus, either $\vel_{\arc}(t) \leq 0$ or
  $\tempdiffvar_{\arc, \node}(t) = 0$.
  In the first case, it follows that
  \begin{equation*}
    \vel_{\arc}(t) \abs{\temp_{\arc:\node}(t) - \temp_\node(t)} \leq 0.
  \end{equation*}
  In the second case, we obtain that
  \begin{equation*}
    0 \leq \abs{\temp_{\arc:\node}(t) - \temp_\node(t)}
    \leq \tempdiffvar_{\arc, \node}(t) = 0,
  \end{equation*}
  which implies $\temp_{\arc:\node}(t) = \temp_\node(t)$.
  Hence, \eqref{eq:distr-heat-out-temp-propagation-out} is fulfilled.
  The case of a node~$\node \in \Vdh$ and an ingoing arc $\arc \in
  \Inedges(\node)$ can be handled analogously.
\end{proof}
Using the reformulated constraints, we obtain the finite-dimensional
NLP model
\begin{equation}
  \label{eq:distr-heat-opt-problem-discr-nlp}
  \begin{split}
    \min \quad & \eqref{eq:distr-heat-cost-function-discr}\\
    \st \quad & \text{Discretized Euler equation: }
    \eqref{eq:distr-heat-euler-const-density-discr},\\
    & \text{Discretized thermal energy equation: }
    \eqref{eq:distr-heat-energy-discr} \text{ or }
    \eqref{eq:distr-heat-energy-central-discr} \text{ and }
    \eqref{eq:distr-heat-energy-central-last-discr},\\
    & \text{Discretized mass balance equation: }
    \eqref{eq:distr-heat-mass-balance},\\
    & \text{Discretized pressure continuity equations: }
    \eqref{eq:distr-heat-pressure-continuity},\\
    & \text{Discretized thermal energy balance equation: }
    \eqref{eq:distr-heat-energy-balance-const-cp},\\
    & \text{Discretized temperature continuity equations: }
    \eqref{eq:distr-heat-out-temp-continuity-no-abs},\\
    & \text{Discretized consumer constraints: }
    \eqref{eq:distr-heat-consumer},\\
    & \text{Discretized depot constraints: }
    \eqref{eq:distr-heat-depot},\\
    & \text{Discretized bounds: }
    \eqref{eq:distr-heat-bounds}
  \end{split}
\end{equation}
for optimizing the control of the district heating network.

The temperature propagation equations
\eqref{eq:distr-heat-out-temp-continuity-no-abs} still imply a
complementarity structure similar to the complementarity
constraints~\eqref{eq:mpcc-max-reform} of the MPCC-based mixing
model.
In particular, this means that for $\vel_\arc(t) = \tempdiffvar_{\arc,
  \node}(t) = 0$, the tangential cone of
\eqref{eq:distr-heat-opt-problem-discr-nlp} restricted to the
constraints~\eqref{eq:distr-heat-out-temp-continuity-no-abs} is
nonconvex.
In this case, the ACQ is not satisfied, which was also the
case for the formulation discussed in Section~\ref{sec:an-mpcc-based}.
Nevertheless, the reformulation presented in this section results in a
larger tangential cone; see Figure~\ref{fig:tangent-cones}.
Later, in Section~\ref{sec:numerical-results}, we will see that this
gain in constraint regularity can lead to significantly improved
numerical results for some NLP solvers.
\begin{figure}
  \tikzexternalenable
  \tikzsetnextfilename{tikz-imgs/mpcc-tangent-cone}
  \begin{tikzpicture}[scale=1.5,font=\sffamily]


\draw[-] (-0.25,0) -- (1.5,0) node[right] {$\posmflow_{\arc:\node}(t)$};

\draw[-] (0,-0.25) -- (0,1.5) node[above] {$\negmflow_{\arc:\node}(t)$};


\draw[->,color=luh-dark-blue,ultra thick] (0,0) -- (1.5,0);

\draw[->,color=luh-dark-blue,ultra thick] (0,0) -- (0,1.5);

\end{tikzpicture}
%
  \tikzexternaldisable

  \quad
  \tikzexternalenable
  \tikzsetnextfilename{tikz-imgs/nlp-tangent-cone}
  \begin{tikzpicture}[scale=1.5,font=\sffamily]


\shade[shading=axis, bottom color=luh-dark-blue, top color=white, shading 
angle=-45, opacity=0.5]
(0,0) -- (1.5,0) -- (1.5,1.5) -- (0,1.5);


\draw[-] (-1.5,0) -- (1.5,0) node[right] {$\vel_\arc(t)$};

\draw[-] (0,-0.25) -- (0,1.5) node[above] {$\tempdiffvar_{\arc, \node}(t)$};


\draw[->,color=luh-dark-blue,ultra thick] (-1.5,0) -- (1.5,0);

\draw[->,color=luh-dark-blue,ultra thick] (0,0) -- (0,1.5);

\end{tikzpicture}
%
  \tikzexternaldisable

  \caption{\rev{Illustration of the t}angential cones \rev{(thick blue
      axes and} \rev{shaded area}) of the MPCC- (left) and NLP-based (right)
    mixing model.}
  \label{fig:tangent-cones}
\end{figure}
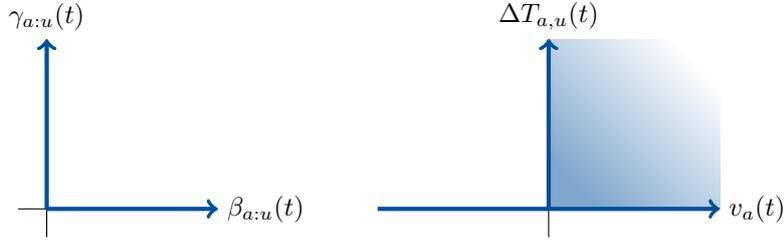



\section{Optimization Techniques}
\label{sec:optimization-techniques}

In this section, we present several optimization techniques that
allow to solve the challenging problem presented and discussed in the
last sections.

\subsection{An Instantaneous Control Approach}
\label{sec:instantaneous-control}

The discretizations described in
Section~\ref{sec:discretized-distr-heat-network} lead to
finite-dimensional but typically very large NLPs or MPCCs.
Since the solution of these problems is very hard in practice, in this
section we develop an instantaneous control approach.
Instantaneous control has been frequently used for challenging control
problems; \cf, \eg, \cite{Choi_et_al:1993,Choi_et_al:1999} for
flow control, and in
\cite{Altmueller_et_al:2010,Hinze2002,Hundhammer2001} for the control
of linear wave equations, of wave equations in networks, or of
vibrating string networks, respectively.
An application to traffic flows can be found
in \cite{Herty_et_al:2007} as well as to mixed-integer nonlinear gas
transport networks models in \cite{Gugat_et_al:2018a}, and for
MPEC-type optimal control problems in~\cite{Antil_et_al:2017}.

The basic idea of instantaneous control is the following.
Starting from the first time period of the discretization and with a
given initial state, we only solve the control problem for this first
time period of our discretized time horizon.
We then apply the resulting control, move one time period
forward in time, solve the control problem restricted to the second
period, etc.
In other words, we solve a series of quasi-stationary problems while
moving forward in time.

This heuristic control approach can be used in two different ways.
First, if successful\rev{, \ie, if an overall feasible control is
  obtained, this} resulting control can be applied directly in
practice.
However, this control typically will be far away from being optimal
for the complete time horizon.
Second, the resulting control can be used to initialize the full NLP
(or MPCC) to obtain a feasible initial point, which usually helps
significantly in solving the overall problem to (local) optimality.

Let us now formally describe the instantaneous control approach.
To this end, we denote the fully discretized problem as
\begin{subequations}
  \label{eq:fully-discretized problem}
  \begin{align}
    \min_x \quad & \sum_{i=1}^N f_i(x_i, x_{i-1})
    \\
    \st \quad & c_i^{\mathcal{E}}(x_i, x_{i-1}) = 0, \quad i = 1, \dotsc, N,
    \\
               & c_i^{\mathcal{I}}(x_i, x_{i-1}) \geq 0, \quad i = 1, \dotsc, N,
    \\
               & d_i^{\mathcal{E}}(x_i) = 0, \quad i = 1, \dotsc, N,
    \\
               & d_i^{\mathcal{I}}(x_i) \geq 0, \quad i = 1, \dotsc, N,
  \end{align}
\end{subequations}
where  $x = (x_i)_{i=0}^N$ and $x_i$ contains all variables
associated to the time point $t_i$.
The super-indices $\mathcal E$, $\mathcal I$ stand for equality and
inequality constraints.
The constraints $c_i^\mathcal{E}$, $c_i^\mathcal{I}$
represent the constraints coupling the time points $t_{i-1}$ and $t_i$ and
$d_i^\mathcal{E}$ and $d_i^\mathcal{I}$ couple all constraints that
only depend on the single time point $t_i$.

Restricted to the time period $[t_{i-1}, t_i]$ and for given $x_{i-1}
= \hat{x}_{i-1}$, this problem can be formulated as
\begin{subequations}
  \label{eq:fully-discretized-problem-one-time-step}
  \begin{align}
    \min_{x_i} \quad & f_i(x_i, \hat{x}_{i-1})
    \\
    \st \quad & c_i^{\mathcal{E}}(x_i, \hat{x}_{i-1}) = 0,
                \quad
                 c_i^{\mathcal{I}}(x_i, \hat{x}_{i-1}) \geq 0,
    \\
               & d_i^{\mathcal{E}}(x_i) = 0,
                 \quad
                 d_i^{\mathcal{I}}(x_i) \geq 0,
  \end{align}
\end{subequations}
With this problem at hand, the instantaneous control method can be
described as in Algorithm~\ref{alg:inst-control}.
\begin{algorithm}
  \caption{Instantaneous Control Algorithm}
  \label{alg:inst-control}
  \begin{algorithmic}[1]
    \REQUIRE The original problem, a discretized time
    horizon~$\set{t_0, \dotsc, t_N}$,
    a full discretization of the problem,
    and initial conditions $x_0 = \hat{x}_0$.
    \FOR{$i = 1, \dotsc, N$}
    \STATE \label{alg:inst-control:solve-problem} Solve the
    problem~\eqref{eq:fully-discretized-problem-one-time-step} for time
    step~$i$ and variables~$x_{i-1}$ fixed to $\hat{x}_{i-1}$.
    \STATE Denote the optimal solution by $\hat{x}_{i}$.
    \ENDFOR
  \end{algorithmic}
\end{algorithm}

Note that this approach is usually very fast in practice because the
variables~$x_i$ in the
NLP~\eqref{eq:fully-discretized-problem-one-time-step} can be
reasonably initialized with the values~$\hat{x}_{i-1}$.
\rev{Note again that if Algorithm~\ref{alg:inst-control} is successful,
\ie, if every problem in Line~\ref{alg:inst-control:solve-problem} is
solved, the method results in an overall feasible control for the
entire time horizon.}

\subsection{Penalty Formulations}
\label{sec:penalty-form}

In this section, we consider the fully discretized
version~\eqref{eq:fully-discretized problem} of our problem.
This problem is mainly governed by equality constraints from physics
and has rather few controls.
Thus, it contains only very few degrees of freedom, which renders
the problem hard to solve in practice\rev{; see, \eg,
\cite{Schmidt_et_al:2016}, where the same phenomenon is discussed for
the case of nonlinear gas network optimization models}.
One possible remedy in such situations is to consider
the relaxed version
\begin{subequations}
  \label{eq:fully-discretized problem-w-slacks}
  \begin{align}
    \min_{x,s \geq 0} \quad & \sum_{i=1}^N f_i(x_i, x_{i-1}) + \norm{Ws}
    \\
    \st \quad & c_i^{\mathcal{E}}(x_i, x_{i-1}) + s_i^{\mathcal{E},c,+}
                - s_i^{\mathcal{E},c,-} = 0,
                & i = 1, \dotsc, N,
    \\
                 & c_i^{\mathcal{I}}(x_i, x_{i-1}) +
                   s_i^{\mathcal{I},c,+} \geq 0,
                   & i = 1, \dotsc, N,
    \\
                 & d_i^{\mathcal{E}}(x_i) + s_i^{\mathcal{E},d,+} -
                   s_i^{\mathcal{E},d,-} = 0,
              & i = 1, \dotsc, N,
    \\
                 & d_i^{\mathcal{I}}(x_i) + s_i^{\mathcal{I},d,+} \geq
                   0,
                   & i = 1, \dotsc, N.
  \end{align}
\end{subequations}
Here, every equality constraint~$c_i^{\mathcal{E}}$ is equipped with a
slack variable~$s_i^{\mathcal{E},c,+}$ for the negative and a slack
variable~$s_i^{\mathcal{E},c,-}$ for the positive violation of the
constraint.
Obviously, inequality constraints only require slack variables for
their negative violation and the constraints~$d$ are handled in the
same way.
The vector~$s$ in the objective function then denotes the vector of
all slack variables used in the constraints and the matrix~$W$ is a
diagonal matrix with positive diagonal entries representing scaling
factors for the respective slack variables.
Obviously, a solution with $s=0$ is also a solution of the original
problem.

We also combine the penalty formulation with the instantaneous control
approach described in the last section.
In practice, it may happen that a sub-problem in the
for-loop of Algorithm~\ref{alg:inst-control} cannot be solved to a
feasible point.
Thus, we also introduce a corresponding penalty formulation in every
iteration of the instantaneous control algorithm.
If, in an iteration, the slack variables are too large,
then we consider the constraint violations of the infeasible
point (for the original problem) and increase the respective weights
in~$W$ in order to penalize the violation of the most violated
constraints even stronger.
Then, the sub-problem is solved again and the process is repeated until the
sub-problem is solved to feasibility (or a maximum number of
re-iterations is reached).
Finally note that it is often preferable in practice to not equip all
constraints with slack variables but only a subset of constraints,
\eg, all nonlinear constraints.
See \cite{Koch_et_al:ch11:2015} for a detailed discussion of relaxed
penalty models in the related field of gas network optimization.

\subsection{A Preprocessing Technique for Fixing Flow Directions}
\label{sec:flow-dir-presolve}

Due to their complementarity structure, the temperature mixing
equations of the MPCC-based mixing model as well as of the NLP-based
mixing model usually lead to difficulties in the solution process. To
avoid these difficulties, we first identify nodes with incident arcs
on which the flow direction is known, which helps to reduce the
hardness of the model.
In addition to simplifying the mixing equations, one can also smoothen
the friction term
\begin{equation*}
  \lambda_\arc \frac{\abs{\vel_\arc}
    \vel_\arc \dens_\arc}{2\diam_\arc}(x,t)
\end{equation*}
in the momentum equation~\eqref{eq:distr-heat-euler-momentum-const-density} if
the sign of the velocity~$\vel_\arc$ is known a priori.
This leads to a simple but powerful preprocessing strategy to identify
arcs with fixed flow direction in Algorithm~\ref{alg:flow-dir-presolve}.
\begin{algorithm}
  \caption{Flow Direction Presolve}
  \label{alg:flow-dir-presolve}
  \begin{algorithmic}[1]
    \REQUIRE The graph $\Graph = (\Vertices, \Arcs)$ of the district heating
    network.
    \ENSURE Sets $\Apos$ and $\Aneg$ only containing arcs with fixed
    positive flow direction or fixed negative flow direction, respectively.
    \STATE Set $\Apos \define \Ac \cup \set{\ad}$ and $\Aneg \define
    \emptyset$.
    \STATE Consider the undirected graph $\hat{\Graph} = (\Vertices, \Arcs
    \setminus \Apos)$.
    \STATE Find all 2-edge-connected components of $\hat{\Graph}$.
    \STATE Contract every 2-edge-connected component in $\hat{\Graph}$ to a
    single node, yielding
    a forest, because the bridge arcs are the only
    arcs that remain in $\hat{\Graph}$, so that all flow directions in
    $\hat{\Graph}$ are known.
    \STATE Assign all arcs in $\hat{\Graph}$ to the sets $\Apos$ or $\Aneg$
    using depth-first search starting in~$\node$ and~$\otherNode$ for $\ad =
    (\node, \otherNode)$.
    \RETURN $\Apos$ and $\Aneg$.
  \end{algorithmic}
\end{algorithm}
The idea behind Algorithm~\ref{alg:flow-dir-presolve} is to return
the depot arc, all consumer arcs, and all arcs that are not contained in a
cycle.
Some arcs in cycles can also have a fixed flow direction as well.
To detect such arcs, other algorithms would be needed, which we do not
discuss.

Given the result of Algorithm~\ref{alg:flow-dir-presolve},
the velocity $\vel_\arc$ and mass flow~$\mflow_\arc$ of arcs~$\arc$ in~$\Apos$
or~$\Aneg$ can be bounded by zero from below or above, respectively.
\begin{align*}
  \vel_\arc \geq 0, \quad \mflow_\arc & \geq 0, \quad \arc \in \Apos,\\
  \vel_\arc \leq 0, \quad \mflow_\arc & \leq 0, \quad \arc \in \Aneg.
\end{align*}
Additionally, all friction terms in the momentum equations can be
reformulated as
\begin{align*}
  \lambda_\arc \frac{\abs{\vel_\arc}
    \vel_\arc \dens_\arc}{2\diam_\arc}
  & = 0, \quad \arc \in \left(\Aff \cup \Abf\right) \setminus \left(\Apos \cup
    \Aneg\right),
  \\
  \lambda_\arc \frac{\vel_\arc^2 \dens_\arc}{2\diam_\arc}
  & = 0, \quad \arc \in \left(\Aff \cup \Abf\right) \cap \Apos,\\
  - \lambda_\arc \frac{\vel_\arc^2 \dens_\arc}{2\diam_\arc}
  & = 0, \quad \arc \in \left(\Aff \cup \Abf\right) \cap \Apos,
\end{align*}
where,  for better readability,  we have omitted the dependence on~$x$
and $t$.
In this way, the friction terms are smoothed for all arcs~$\arc \in \Apos
\cup \Aneg$.

Consider now the MPCC-based mixing model.
For arcs $\arc \in \Apos$, one can fix the variable for the negative
part of the mass as~$\negmflow_\arc$ to $0$ and for arcs $\arc \in
\Aneg$, one can fix the variable for positive part of the mass flow
$\posmflow_\arc$ to $0$.
The MPCC-based mixing equation
\eqref{eq:distr-heat-temperature-mixing-in-complementary} then turns into
\begin{equation*}
  \temp_\node(t)  = \frac{\sum_{\arc \in \Inedges(\node) \setminus \Aneg}
  \posmflow_\arc(t) \temp_{\arc:\node}(t)
  + \sum_{\arc \in \Outedges(\node) \setminus \Apos}
  \negmflow_\arc(t) \temp_{\arc:\node}(t)}
{\sum_{\arc \in \Inedges(\node) \setminus \Aneg} \posmflow_\arc(t)
  + \sum_{\arc \in \Outedges(\node) \setminus \Apos} \negmflow_\arc(t)},
\quad t \in \Tint,
\end{equation*}
and \eqref{eq:distr-heat-temperature-mixing-out-complementary-out} and
\eqref{eq:distr-heat-temperature-mixing-out-complementary-in} can be
simplified to
\begin{align*}
  0 & = \posmflow_\arc(t)(\temp_{\arc:\node}(t) - \temp_\node(t)),
  & & \arc \in \Outedges(\node) \setminus \left(\Apos \cup \Aneg\right),\, t
  \in
  \Tint,
  \\
  0 & = \temp_{\arc:\node}(t) - \temp_\node(t),
  & & \arc \in \Outedges(\node) \cap \Apos,\, t \in \Tint,
  \\
  0 & = \negmflow_\arc(t)(\temp_{\arc:\node}(t) - \temp_\node(t)),
  & & \arc \in \Inedges(\node) \setminus \left(\Apos \cup \Aneg\right),\, t \in
  \Tint,
  \\
  0 & = \temp_{\arc:\node}(t) - \temp_\node(t),
  & & \arc \in \Inedges(\node) \cap \Aneg,\, t \in \Tint.
\end{align*}
This means that for $\arc \in \Apos$,
Equation~\eqref{eq:distr-heat-temperature-mixing-out-complementary-in}
is not needed any more and for $\arc \in \Aneg$,
Equation~\eqref{eq:distr-heat-temperature-mixing-out-complementary-out}
can be removed.
Thus, every MPCC-mixing equation that contains an arc in $\Apos$ or
$\Aneg$ either gets simplified or is dropped.
Moreover, the number of nonlinearities is reduced as well.

Similarly, for the NLP-based mixing model, we can simplify the
temperature propagation
equations~\eqref{eq:distr-heat-out-temp-continuity-no-abs} as
\begin{align*}
  \vel_{\arc}(t) \tempdiffvar_{\arc, \node}(t) & \leq 0,
  & & \node \in \Vdh, \arc \in \Outedges(\node) \setminus \left(\Apos \cup
  \Aneg\right),\\
  \tempdiffvar_{\arc, \node}(t) & \geq \temp_{\arc:\node}(t) - \temp_\node(t),
  & & \node \in \Vdh, \arc \in \Outedges(\node) \setminus \left(\Apos \cup
  \Aneg\right),\\
  \tempdiffvar_{\arc, \node}(t) & \geq \temp_\node(t) - \temp_{\arc:\node}(t),
  & & \node \in \Vdh, \arc \in \Outedges(\node) \setminus \left(\Apos \cup
  \Aneg\right),\\
  \temp_{\arc:\node}(t) - \temp_\node(t) & = 0,
  & & \node \in \Vdh, \arc \in \Outedges(\node) \cap \Apos,\\
  \vel_{\arc}(t) \tempdiffvar_{\arc, \node}(t) & \geq 0,
  & & \node \in \Vdh, \arc \in \Inedges(\node) \setminus \left(\Apos \cup
  \Aneg\right),\\
  \tempdiffvar_{\arc, \node}(t) & \geq \temp_{\arc:\node}(t) -
  \temp_\node(t),  & & \node \in \Vdh, \arc \in \Inedges(\node) \setminus
  \left(\Apos \cup \Aneg\right),\\
  \tempdiffvar_{\arc, \node}(t) & \geq \temp_\node(t) -
  \temp_{\arc:\node}(t),  & & \node \in \Vdh, \arc \in \Inedges(\node)
  \setminus \left(\Apos \cup \Aneg\right),\\
  \temp_{\arc:\node}(t) - \temp_\node(t) & = 0,
  & & \node \in \Vdh, \arc \in \Inedges(\node) \cap \Aneg.
\end{align*}
Again, all equations in
\eqref{eq:distr-heat-out-temp-continuity-no-abs} that are defined on
arcs in $\Apos$ or $\Aneg$ either get simplified or dropped.
The thermal energy balance
equation~\eqref{eq:distr-heat-energy-balance-const-cp} remains
unchanged.

\subsection{\rev{Initial Conditions}}
\label{sec:init-term-cond}

To compute a good and realistic control of the district heating network,
physically reasonable initial conditions are required.
To obtain such conditions, we compute a stationary solution of the
network for the first time step.
The stationary model we use is the same as our standard model at~$t=0$,
except that all time derivatives are zero.
In this case, the Euler momentum
equation~\eqref{eq:distr-heat-euler-momentum-const-density} becomes
\begin{align*}
  \frac{\press_\arc(\length_\arc,0) - \press_\arc(0,0)}{\length_\arc}
  = - \grav \dens h'_\arc
  - \lambda_\arc \frac{\abs{\vel_\arc}
    \vel_\arc \dens}{2\diam_\arc}(0),
  \quad \arc \in \Aff \cup \Abf,
\end{align*}
and the thermal energy equation \eqref{eq:distr-heat-energy} becomes
\begin{equation*}
\vel_\arc(0) \dparlong{x}{\temp_\arc}(x,0)
+ \frac{4 \heattrans_\arc}{\heatcap \dens_\arc(x,0)
  \diam_\arc}(\temp_\arc(x,0)-\soiltemp)
= 0,
\quad \arc \in \Aff \cup \Abf.
\end{equation*}
All algebraic equations stay the same but are only considered at~$t=0$.
The solution of this stationary model is then used to identify the initial
conditions.


\section{Numerical Results}
\label{sec:numerical-results}

In this section, we present and discuss numerical results for the
models and techniques introduced in the previous sections.
The models have been formulated using \GAMS~25.1.2~\cite{McCarl:2009}.
The resulting instances are solved using the solvers
\Ipopt~3.12~\cite{Waechter_Biegler:2006},
\KNITRO~10.3.0~\cite{Byrd_et_al:2006},
\CONOPTfour~4.06~\cite{Drud:1994,Drud:1995,Drud:1996}, and
\SNOPT~7.2-12.1~\cite{Gill_et_al:2005}.
We apply our technique to two different realistic district heating
networks; the so-called \AROMA network given in \Figref{fig:aroma} and
the so-called \STREET network given in \Figref{fig:street}.
\begin{figure}
  \tikzexternalenable
  \tikzsetnextfilename{tikz-imgs/aroma}
  \begin{tikzpicture}
  \def \radius {11pt}
  \def \lineThickness {very thick}
  \def \nodes {0.0/0.0/VL0, 3.3333333333333335/0.0/VL1, 
  4.666666666666667/-1.3333333333333333/VL2, 8.0/-1.3333333333333333/VL3, 
  9.333333333333334/0.0/VL4, 12.0/0.0/VL5, 8.0/1.3333333333333333/VL7, 
  12.0/1.3333333333333333/VL8, 4.666666666666667/1.3333333333333333/VL6, 
  3.3333333333333335/0.0/VL1}
  \def \pipes {VL0/VL1, VL1/VL2, VL2/VL3, VL3/VL4, VL4/VL5, VL4/VL7, VL7/VL8, 
  VL6/VL7, VL1/VL6}
  
  \foreach \x/\y/\name in \nodes
  \node[circle, draw, minimum size=\radius](\name) at (\x,\y){ };
  
  \foreach \s/\t in \pipes
  \draw [->, \lineThickness] (\s) -- (\t);
\end{tikzpicture}
  \tikzexternaldisable

  \caption{The forward-flow part of the \AROMA network}
  \label{fig:aroma}
\end{figure}
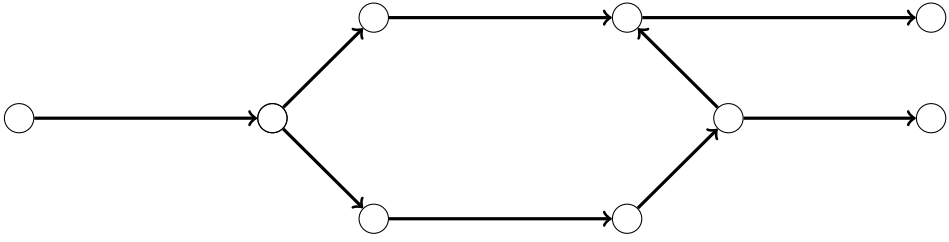
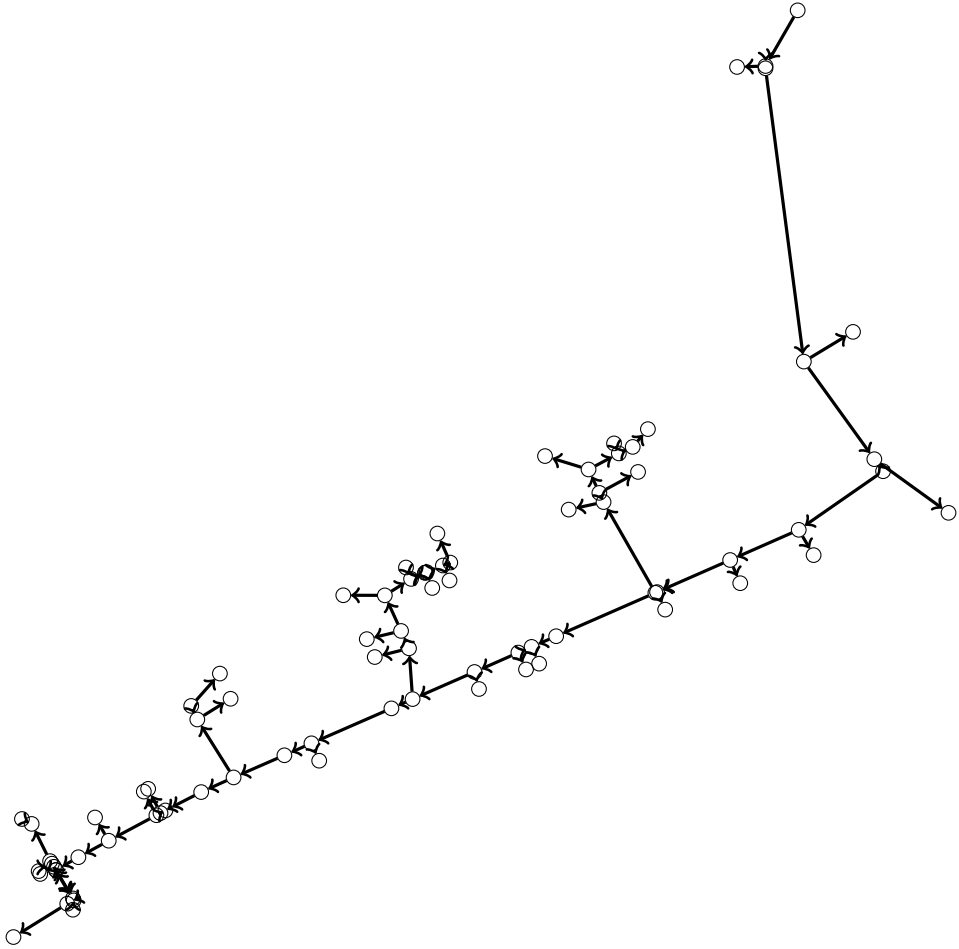
\begin{figure}
  \tikzexternalenable
  \tikzsetnextfilename{tikz-imgs/street}
  \begin{tikzpicture}
    \def \radius {11pt}
    \def \lineThickness {very thick}
    \def \nodes {5.080064999999999/8.221045/n35, 
    4.65738/7.482640000000001/n145, 
    4.65668/7.45285/n78, 5.161365/3.563695/n95, 6.088745/2.269805/n58, 
    6.2039/2.10992/n142, 5.09408/1.33328/n121, 4.191585/0.9329850000000001/n29, 
    3.229815/0.507705/n36, 3.2079700000000004/0.49821/n19, 
    1.902235/-0.07736499999999999/n1, 1.57901/-0.21841/n38, 
    1.40543/-0.29428/n64, 
    1.50589/-0.51829/n143, 0.828155/-0.5514300000000001/n49, 
    0.8868199999999999/-0.7772/n138, 
    0.013425000000000001/-0.9110849999999999/n28, 
    -0.26522/-1.034035/n152, -1.317405/-1.497105/n136, 
    -1.6733149999999999/-1.6560400000000002/n123, 
    -2.341955/-1.9497250000000002/n105, -2.819515/-1.18163/n33, 
    -2.38152/-0.9067299999999999/n8, -2.89958/-1.0012050000000001/n2, 
    -2.52215/-0.57273/n32, -2.7681899999999997/-2.14395/n67, 
    -3.23904/-2.3874/n7, 
    -3.3037900000000002/-2.42255/n161, -3.46609/-2.100765/n79, 
    -3.3577399999999997/-2.4508/n87, -3.523035/-2.137715/n26, 
    -3.9847650000000003/-2.7886949999999997/n69, 
    -4.165335000000001/-2.48086/n4, 
    -4.38421/-3.00761/n30, -4.679295/-3.177245/n45, -4.70498/-3.13436/n92, 
    -4.73237/-3.093075/n111, -4.7561599999999995/-3.05904/n85, 
    -4.446035/-3.5794849999999996/n90, -4.530505/-3.624015/n104, 
    -5.238185/-4.066115/n63, -4.455685/-3.553695/n15, -4.45612/-3.703785/n102, 
    -4.679295/-3.177245/n45, -4.999015/-2.565885/n82, -5.126635/-2.5006/n77, 
    -4.90691/-3.191845/n3, -4.886305/-3.229555/n81, -1.2156/-1.728005/n75, 
    -0.033755/-0.238305/n31, -0.48441/-0.353475/n126, -0.13841/-0.007245/n107, 
    -0.5893900000000001/-0.11597500000000001/n157, 
    -0.352025/0.46426999999999996/n150, 
    -0.0067599999999999995/0.6823349999999999/n127, 
    0.17537/0.7626000000000001/n160, 0.183115/0.76605/n61, 
    0.19856000000000001/0.772845/n156, 0.41187500000000005/0.86026/n76, 
    0.5089349999999999/0.8990449999999999/n153, 0.33947/1.28417/n141, 
    0.49917999999999996/0.661805/n60, 0.271625/0.562125/n12, 
    -0.07361/0.833855/n66, 
    -0.898385/0.46573000000000003/n70, 1.67822/-0.44127/n73, 
    2.525945/1.70098/n44, 
    2.4709700000000003/1.82303/n122, 2.328475/2.1333100000000003/n43, 
    1.75576/2.31108/n106, 2.72697/2.346025/n99, 2.66403/2.479405/n52, 
    2.9095500000000003/2.43264/n20, 3.109475/2.668565/n91, 
    2.97991/2.1008199999999997/n124, 2.06732/1.6012049999999998/n57, 
    3.3368/0.27531/n88, 4.324479999999999/0.625475/n65, 
    5.289915/0.9977550000000001/n14, 7.064964999999999/1.55869/n132, 
    5.808285/3.957945/n137, 4.282525/7.469965/n103}
    \def \pipes {n35/n145, n145/n78, n78/n95, n95/n58, n58/n142, n142/n121, 
    n121/n29, n29/n36, n36/n19, n19/n1, n1/n38, n38/n64, n64/n143, n64/n49, 
    n49/n138, n49/n28, n28/n152, n152/n136, n136/n123, n123/n105, n105/n33, 
    n33/n8, 
    n33/n2, n2/n32, n105/n67, n67/n7, n7/n161, n161/n79, n7/n87, n87/n26, 
    n87/n69, 
    n69/n4, n69/n30, n30/n45, n45/n92, n92/n111, n111/n85, n85/n90, n90/n104, 
    n104/n63, n15/n90, n15/n102, n45/n15, n85/n82, n82/n77, n111/n3, n92/n81, 
    n136/n75, n28/n31, n31/n126, n31/n107, n107/n157, n107/n150, n150/n127, 
    n127/n160, n160/n61, n61/n156, n156/n76, n76/n153, n153/n141, n76/n60, 
    n156/n12, n127/n66, n150/n70, n38/n73, n19/n44, n44/n122, n122/n43, 
    n43/n106, 
    n43/n99, n99/n52, n99/n20, n20/n91, n122/n124, n44/n57, n36/n88, n29/n65, 
    n121/n14, n58/n132, n95/n137, n145/n103}
    
    \foreach \x/\y/\name in \nodes
    \node[circle, draw, minimum size=\radius, scale=0.5](\name) at (\x,\y){ };
    
    \foreach \s/\t in \pipes
    \draw [->, \lineThickness] (\s) -- (\t);
  \end{tikzpicture}
  \tikzexternaldisable

  \caption{The forward-flow part of the \STREET network}
  \label{fig:street}
\end{figure}
The \AROMA network consists of $18$~nodes, $24$~arcs ($1$~depot,
$5$~consumers, and $18$~pipes), and one cycle
each in the forward-flow and the backward-flow network.
Its total pipe length is $\SI{7262.4}{\meter}$.
The \STREET network is a part of a real-world district
heating network with $162$~nodes, $195$~arcs ($1$~depot,
$32$~consumers, and $162$~pipes), and a total pipe length of
$\SI{7627.106}{\meter}$.
\rev{Both networks contain a cycle.
  Thus, not all flow directions are known in advance.
  The preprocessing technique described in
  Section~\ref{sec:flow-dir-presolve} can fix the flow directions for
  6 out of the 18~pipes of the \AROMA network and for 150 out of the
  162~pipes for the \STREET network.
  The larger number of fixations for the \STREET network follows
  from the fact that it only contains a small cycle whereas the major
  part of the network is tree-shaped.}

\rev{Let us also note that we used the $\ell_1$ norm throughout this
  section for the penalty terms in~\eqref{eq:fully-discretized
    problem-w-slacks}.}

The remainder of this section is split up into two parts.
In \Secref{sec:comp-model-vari}, we compare different variants of our
model (namely the MPCC- and the NLP-based mixing model as well as the
two different discretization schemes for the PDEs) and different NLP
solvers.
In \Secref{sec:optim-depot-contr} we then discuss properties of
optimized heat and flow controls at the depot for the \AROMA and the
\STREET network.

\subsection{Comparison of Model Variants and NLP Solvers}
\label{sec:comp-model-vari}

We now compare the performance of different NLP solvers
applied to the two different spatial discretization schemes (the
implicit Euler and the scheme based on central differences) as well as
the two mixing models (the MPCC- and the NLP-based model).
To this end, we consider the \AROMA network with a time horizon of one
day equipped with a time discretization using $30$~minute intervals.
The stepsize of the spatial discretization is $\SI{150}{\meter}$.

The numerical results are given in \Tabref{tab:aroma-1800-150}.
\begin{table}
  \centering
  \caption{Numerical results for all combination of model variants and
    NLP solvers for the \AROMA network with $\timediff =
    \SI{1800}{\second}$ and $\spacediffarc = \SI{150}{\meter}$.}
  \label{tab:aroma-1800-150}
  \resizebox{\columnwidth}{!}{\begin{tabular}{rrrrrrrrrrrrrr}
\toprule
Mixing & Disrc. & $t$ (all) & $t$ (NLP) & $t$ (IC) & \#IC & Mean & Median &
Min. & Max. & $t$ (stat) & \#stat & Obj. & Cost\\
\midrule
\multicolumn{14}{c}{\CONOPTfour}\\
\midrule
MPCC & Centr. diff. & $82.593$ & $80.731$ & $1.862$ & $60$ & $0.031$ & $0.030$ & $0.028$ & $0.042$ & $0.295$ & $4$ & $97.863$ & $28.318$\\
MPCC & Impl. Euler & $16.152$ & $13.742$ & $2.410$ & $78$ & $0.031$ & $0.030$ & $0.027$ & $0.046$ & $0.200$ & $3$ & $275.101$ & $57.291$\\
NLP & Centr. diff. & $16.732$ & $14.910$ & $1.822$ & $60$ & $0.030$ & $0.030$ & $0.028$ & $0.043$ & $0.218$ & $2$ & $119.036$ & $26.416$\\
NLP & Impl. Euler & $18.828$ & $16.634$ & $2.194$ & $72$ & $0.030$ & $0.029$ & $0.028$ & $0.041$ & $0.131$ & $1$ & $100.812$ & $27.647$\\
\midrule
\multicolumn{14}{c}{\Ipopt}\\
\midrule
MPCC & Centr. diff. & $272.993$ & $261.890$ & $11.103$ & $56$ & $0.198$ & $0.166$ & $0.101$ & $0.954$ & $2.721$ & $2$ & $65.405$ & $23.203$\\
MPCC & Impl. Euler & $447.746$ & $431.985$ & $15.761$ & $79$ & $0.200$ & $0.148$ & $0.110$ & $1.135$ & $2.842$ & $2$ & $91.031$ & $49.329$\\
NLP & Centr. diff. & $326.375$ & $319.468$ & $6.907$ & $51$ & $0.135$ & $0.126$ & $0.087$ & $0.250$ & $0.262$ & $1$ & $62.012$ & $47.153$\\
NLP & Impl. Euler & $242.349$ & $68.533$ & $173.816$ & $98$ & $1.774$ & $0.229$ & $0.116$ & $62.834$ & $0.313$ & $1$ & $198.393$ & $53.842$\\
\midrule
\multicolumn{14}{c}{\KNITRO}\\
\midrule
MPCC & Centr. diff. & $933.063$ & $900.254$ & $32.809$ & $74$ & $0.443$ & $0.104$ & $0.046$ & $22.293$ & $0.142$ & $1$ & $42.013$ & $21.018$\\
MPCC & Impl. Euler & $925.191$ & $900.289$ & $24.902$ & $83$ & $0.300$ & $0.147$ & $0.072$ & $6.032$ & $1.527$ & $1$ & --- & $18.170$\\
NLP & Centr. diff. & $61.115$ & $57.636$ & $3.479$ & $50$ & $0.070$ & $0.068$ & $0.044$ & $0.109$ & $0.056$ & $1$ & $44.570$ & $43.987$\\
NLP & Impl. Euler & $38.068$ & $32.102$ & $5.966$ & $71$ & $0.084$ & $0.069$ & $0.048$ & $0.430$ & $0.187$ & $1$ & --- & $57.043$\\
\midrule
\multicolumn{14}{c}{\SNOPT}\\
\midrule
MPCC & Centr. diff. & $25.146$ & $23.621$ & $1.525$ & $48$ & $0.032$ & $0.030$ & $0.026$ & $0.071$ & $0.116$ & $2$ & $51.592$ & $46.923$\\
MPCC & Impl. Euler & $24.639$ & $20.852$ & $3.787$ & $114$ & $0.033$ & $0.032$ & $0.025$ & $0.053$ & $0.138$ & $2$ & $195.360$ & $54.798$\\
NLP & Centr. diff. & $44.661$ & $42.024$ & $2.637$ & $71$ & $0.037$ & $0.035$ & $0.028$ & $0.060$ & $0.062$ & $1$ & $182.363$ & $55.140$\\
NLP & Impl. Euler & $45.769$ & $43.914$ & $1.855$ & $53$ & $0.035$ & $0.033$ & $0.028$ & $0.072$ & $0.067$ & $1$ & --- & $50.311$\\
\bottomrule
\end{tabular}}
\end{table}
The columns of the table contain the following information.
\begin{description}
\item[Mixing] The mixing model; MPCC-based
  (\Secref{sec:an-mpcc-based}) or NLP-based
  (\Secref{sec:nlp-mixing-model}).
\item[Discr.] The implicit Euler discretization
  (\Secref{sec:impl-euler-discr}) or the discretization based on
  central differences (\Secref{sec:centr-discr-scheme}).
\item[$t$ (all)] The overall solution time including the initial value
  computation using the instantaneous control approach
  (\Secref{sec:instantaneous-control}), the presolve step to fix flow
  directions (\Secref{sec:flow-dir-presolve}), and the computation of
  the initial physical state (\Secref{sec:init-term-cond}).
  All running times in the table are given in seconds.
\item[$t$ (NLP)] The time to solve the NLP on the entire time horizon,
  which is initialized with the solution of the instantaneous
  control approach.
\item[$t$ (IC)] The time required to apply the instantaneous control
  approach.
\item[\#IC] The total number of instantaneous control steps including
  re-iterations applied if the scaled max-norm of all slack values
  exceeds the tolerance of~$10^{-2}$.
\item[Mean, Median, Min. Max.] The mean, median, minimum, and maximum
  time of all (re-)iterations of the instantaneous control approach.
\item[$t$ (stat)] The time required to compute the stationary solution
  that is used as an initial physical state.
\item[\#stat] The required number of re-iterations for computing the
  stationary solution.
\item[Obj.] The objective function value of the problem, which is the
  sum of the control costs and the scaled penalty terms.
  Here, \enquote{---} means that the final value of the max-norm of
  all scaled slack values exceeds the tolerance of $10^{-2}$.
\item[Cost] The control costs part of the objective function value;
  see \eqref{eq:distr-heat-cost-function-discr}.
\end{description}

If we first consider the overall time required to solve the problem
(\enquote{$t$ (all)}), we see that the results are highly
heterogeneous \wrt the chosen NLP solver.
The fastest approach (\SI{16.732}{\second}) is obtained by \CONOPTfour
applied to the MPCC-based mixing model and the implicit Euler
discretization.
In contrast, \KNITRO applied to the MPCC-based mixing model and the
discretization scheme based on central differences takes
\SI{933.063}{\second}, which corresponds to a factor larger then 55.
Since every solver gets exactly the same models to be solved, this
strongly indicates the hardness of the district heating network
optimization problems.

It also strongly depends on the chosen solver whether the MPCC- or the
NLP-based mixing model is used.
For instance, \KNITRO performs very poor on the MPCC-based model and
significantly benefits from the NLP-based reformulation.
On the other hand, for \SNOPT it is exactly the other way around
(although the difference in solution times is not as drastic as for
\KNITRO).
The choice of the discretization scheme for the PDEs does not
influence the solution times significantly.
However, it may influence how the solvers are able to reduce the
penalty terms in the objective function; see, \eg, \KNITRO, which is
not able to reduce the penalty terms so that the max-norm of all
scaled slack values is below $10^{-2}$ if the implicit Euler scheme is used.
A comparable behavior can also be seen in the instantaneous control
approach:
All solvers require more re-iterations to reduce the
penalty terms for the implicit Euler discretization.
The only exception is \SNOPT applied to the NLP-based mixing model.

As expected, the instantaneous control approach is solved very fast
for all solvers.
The single iterations are all solved in less then a second on average.
The only exception is \Ipopt applied to the
NLP-based mixing model and the implicit Euler discretization, where
some convergence issues occur within the instantaneous control
approach.
The running times required to compute the stationary solution that we
use as the initial physical state are in the same orders of magnitude
as a single instantaneous control approach iteration but slightly
longer, since no good initial point can be used by the NLP solvers.

Finally, let us also discuss the (local) optimal solutions obtained by
the different NLP solvers applied to the different model variants.
The objective function of the overall NLP consists of two parts: the
original control costs and the scaled penalty terms.
Scaling the penalty terms is always an issue in practical physical
applications for which different penalty terms have
different physical units.
Obviously, the applicability of the obtained depot control strongly
depends on the size of the penalty part of the objective, since large
slack values correspond to violated physical or technical constraints.
The table shows that different solvers find very different local
optima of the problem.
For instance, \CONOPTfour is a rather fast solver but the obtained
local optima also contain large slack values.
Contrarily, \KNITRO applied to the discretization based on central
differences computes local optima with almost vanishing slack values.
Compromising between the difference of the values in the last two columns
(which is the size of the scaled penalty terms in the objective) and the
solution times, \KNITRO applied to the discretization based on
central differences and the NLP-based mixing model seems to be the best
combination of model variant and NLP solver.

\subsection{Optimized Depot Controls}
\label{sec:optim-depot-contr}

We now present some exemplary optimal depot controls.
In \Figref{fig:control-profile-aroma} (top), the control
profile is given for the \AROMA network and the \enquote{winner
  setting} discussed in the last section.
For the given profiles, we first assume that the amount of power generated
by waste incineration is unbounded.
This leads to a control (solid line) that mainly follows the
aggregated consumption of the households (dashed line).
Due to the heat losses in the transport network, the generated power
at the depot is slightly larger than the aggregated consumption.
Since pressure losses are small in the network, the pressure increase at
the depot is almost negligible.
\begin{figure}[ht]
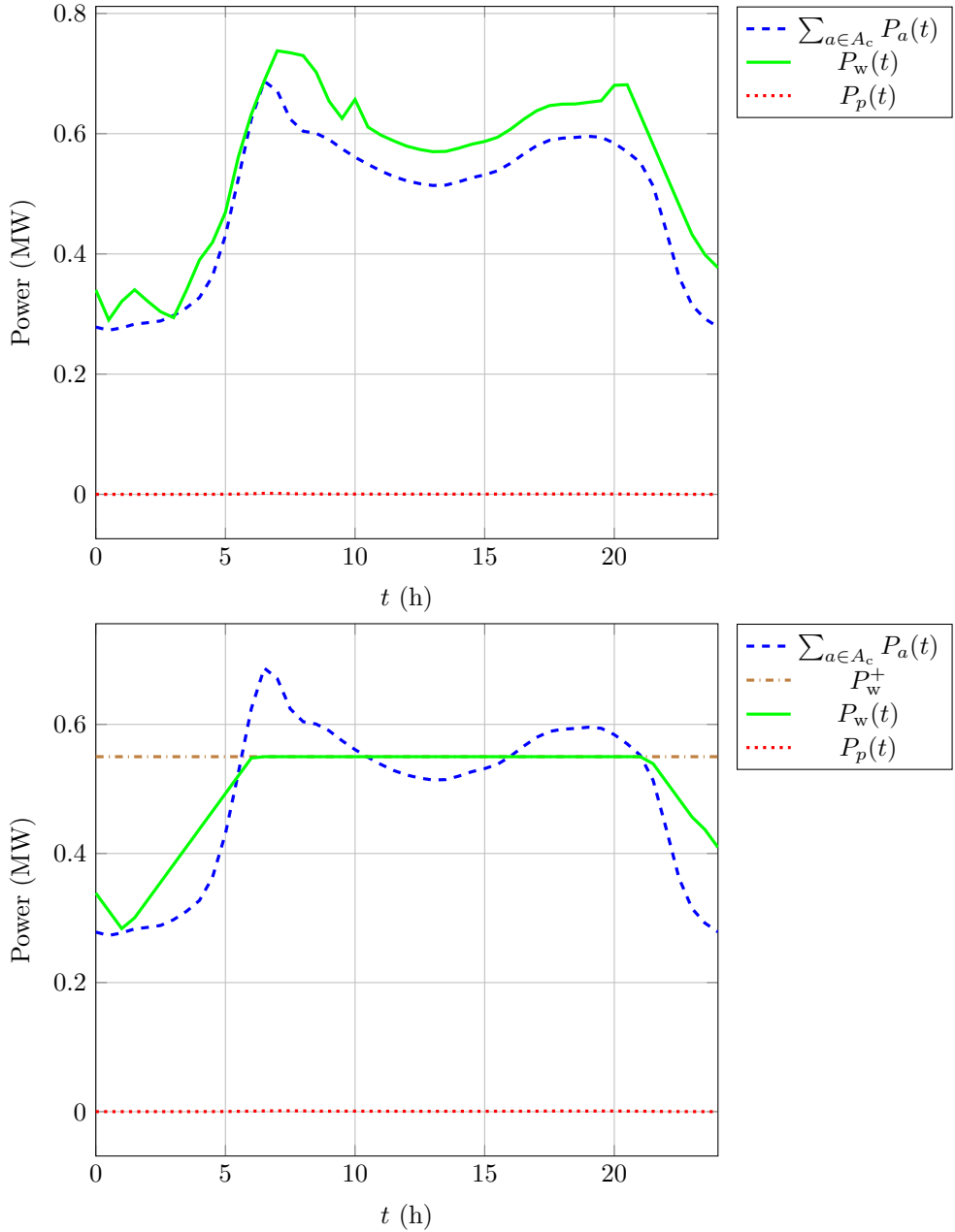

  \powerprofilenobound{Aroma_knitro_NLP_central_diff_1800_150_only_waste}
  \powerprofile{Aroma_knitro_NLP_central_diff_1800_150}
  \vspace*{-7mm}
  \caption{Aggregated power consumption (dashed curve), power generated by
    waste incineration at the depot (solid curve), and pressure
    increase at the depot (dotted curve) for the \AROMA network
    without (top) and with (bottom) waste incineration bound}
  \label{fig:control-profile-aroma}
\end{figure}
The power control qualitatively changes if power generated by waste
incineration is bounded; see the dashed-dotted line in
\Figref{fig:control-profile-aroma} (bottom).
Since aggregated power consumption is above this bound in some morning
and evening hours, the optimized power control anticipates this and
pre-heats the network in the hours before.
This is obviously required because again simply following the
aggregated consumption curve would result in hours where the power consumption
would need to be curtailed.
The same effect can be observed for the optimized depot control for
the \STREET network in \Figref{fig:control-profile-street}.
\begin{figure}[ht]
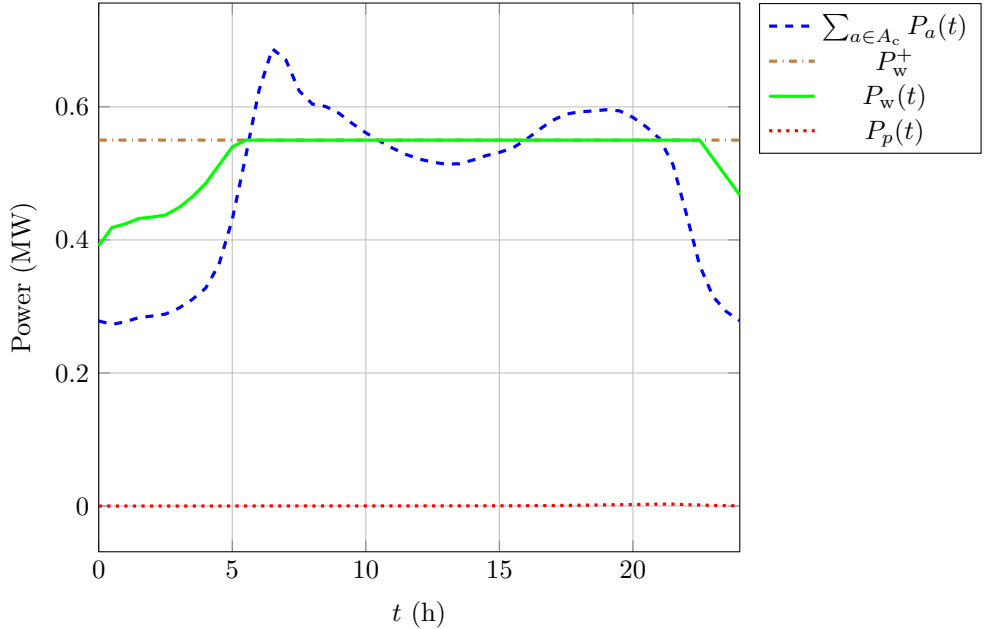

  \powerprofile{street2_ipopt_NLP_central_diff_1800_100}
  \vspace*{-7mm}
  \caption{Aggregated power consumption (dashed curve), power generated by
    waste incineration at the depot (solid curve), and pressure
    increase at the depot (dotted curve) for the \STREET network
    with waste incineration bound (dashed-dotted line)}
  \label{fig:control-profile-street}
\end{figure}
For the \STREET network, our preliminary numerical experiments
revealed that the NLP solver \Ipopt applied to the NLP-based mixing
model, the discretization scheme based on central differences as well
as $\timediff = \SI{1800}{\second}$ and $\spacediffarc =
\SI{100}{\meter}$ delivers the best results; cf. also the respective
discussion for the \AROMA network in \Secref{sec:comp-model-vari}.

Let us now finally discuss the interplay between mass flow and
water temperature on the example of the \STREET network.
Considering the power constraints of the consumers and the depot
\eqref{eq:distr-heat-consumer-power-usage} and
\eqref{eq:distr-heat-depot-power-production}, we see that power
consumption is mainly satisfied by the product of mass flow and
temperature differences.
Thus, to satisfy demand we can either increase the mass flow or the
outlet temperature of the depot.
These two values are shown in \Figref{fig:street-massflow-temp} for the
entire time horizon.
It can be seen that power consumption during night is mainly covered by
high outlet temperatures at the depot.
Here, this temperature is at its upper bound (\SI{403.15}{\K}), which is
obtained by waste incineration at the depot.
Around 4~AM it is anticipated that in the morning hours high
outlet temperatures will not be enough either due to the upper bound
of the temperature or the upper bound on waste incineration.
Thus, mass flows need to be increased, which then leads to outlet
temperatures that can be decreased.
During the remainder of the day it can be seen that mass flows and
temperatures change in an opposed way---decreasing outlet temperatures
require increased mass flows and vice versa.
\begin{figure}[ht]
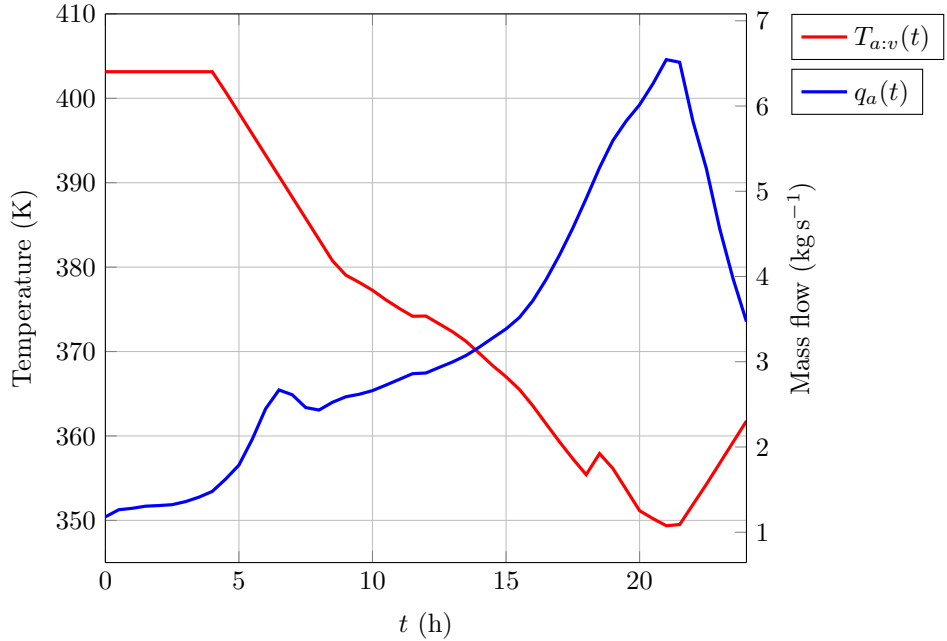

  \tempmflowprofile{street2_ipopt_NLP_central_diff_1800_100}
  \vspace*{-3mm}
  \caption{Outlet temperature and mass flow at the depot arc for the
    \STREET network.}
  \label{fig:street-massflow-temp}
\end{figure}


\section{Conclusion}
\label{sec:conclusion}

In this paper, we presented an accurate dynamic optimization
model for the control of district heating networks.
The model is mainly governed by the nonlinear partial differential
equations for water and heat flow as well as by nodal mixing models
for tracking different water temperatures in the network.
This results in a PDE-constrained MPCC or NLP model, depending on
the chosen option for the genuinely nonsmooth mixing models.
After applying suitable discretizations for the PDEs, we obtain a
finite-dimensional but large and highly nonlinear MPCC or NLP, for
which we develop different optimization techniques that then allow us
to solve realistic instances.
The applicability of the discussed models and techniques is
illustrated by a numerical case study on different networks.

The literature on mathematical optimization for district heating
networks is not as mature as for other utility networks like gas or
water networks.
Thus, many research topics remain to be addressed.
In our future work, we plan to consider adaptive techniques as
in~\cite{Mehrmann_Schmidt_Stolwijk:2017} that are based on model
hierarchies for the physics model.
Here, port-Hamiltonian modeling frameworks seem to be favorable.
A first step in this direction is already done
in~\cite{Hauschild_et_al:2019}.
In terms of the application, we think that the most urgent research
topics are to develop mathematical optimization techniques for dealing
with uncertainties (especially w.r.t. the consumption of the
households) as well as the coupling of district heating networks with
power networks.


\section*{Acknowledgments}
\label{sec:acknowledgements}

We thank \rev{the} Deutsche Forschungsgemeinschaft for their
support within projects A05, B03, and B08 in the
Sonderforschungsbereich/Transregio 154 \enquote{Mathematical
  Modelling, Simulation and Optimization using the Example of Gas
  Networks} and acknowledge the support by the German
Bundesministerium für Bildung und Forschung within
the project \enquote{EiFer}.
Moreover, we are very grateful to all the colleagues within the EiFer
consortium for many fruitful discussions on the topics of this paper
and for providing the data.


\printbibliography

\end{document}
